\newtheorem{theorem}{Theorem}
\newtheorem{lemma}[theorem]{Lemma}
\newtheorem{proposition}[theorem]{Proposition}
\newtheorem{remark}[theorem]{Remark}
\newtheorem{claim}[theorem]{Claim}
\newcounter{mycount}
\def\0{\mathbf{0}}
\author{G\"{u}lnaz Boruzanl{\i} Ekinci\affiliationmark{1}
  \and John Baptist Gauci\affiliationmark{2}
  }
\title[The super-connectivity of Johnson graphs]{The super-connectivity of Johnson graphs}
\affiliation{
  Ege University, Turkey\\
  University of Malta, Malta
  }
\keywords{connectivity, super--connectivity, Johnson graph, uniform intersecting families}
\begin{document}
\publicationdetails{22}{2020}{1}{12}{5600}
\maketitle
\begin{abstract}
     For positive integers $n,k$  and $t$, the uniform subset graph $G(n, k, t)$ has all $k$--subsets of $\{1,2,\ldots, n\}$ as vertices and two $k$--subsets are joined by an edge if they intersect at exactly $t$ elements. The Johnson graph $J(n,k)$ corresponds to $G(n,k,k-1)$, that is, two vertices of $J(n,k)$ are adjacent if the intersection of the corresponding $k$-subsets has size \break $k-1$. A super vertex--cut of a connected graph is a set of vertices whose removal disconnects the graph without isolating a vertex and the super--connectivity is the size of a minimum super vertex--cut. In this work, we fully determine the  super--connectivity of the family of Johnson graphs $J(n,k)$ for $n\geq k\geq 1$.
\end{abstract}

\section{Introduction} \label{Intro}

Let $n$ and $k$ be integers such that $n\geq k\geq 1$ and let $[n]$ denote the set $\{1,2,\ldots,n\}$. The \emph{Johnson graph} $J(n,k)$ is the graph with vertex set $V(J(n,k))$ consisting of all the $k$--subsets of $[n]$ and with edge set $E(J(n,k))=\{\{u,v\} : u,v\in V(J(n,k)) {\rm ~and~} |u\cap v|=k-1\}$. It is well-known that $J(n,k) $ is regular of degree $ k(n-k) $ and that $J(n,1)$ is isomorphic to the complete graph on $n$ vertices.

Johnson graphs appear in the theory of association schemes (see \cite{Moon1984}) and are a particular instance of the more general uniform subset graphs $G(n,k,t)$ introduced by \cite{ChenLih1987} corresponding to the case when $t=k-1$. Special cases of the uniform subset graphs have been investigated extensively for a variety of parameters, such as girth, diameter, Hamiltonicity and connectivity (see for example \cite{Agong2018, GauciBoruzanli, ChenWeng2008, Mutze2017, Simpson1994}). The quasipolynomial algorithm for graph isomorphism by \cite{Babai2016} has recently put Johnson graphs in the limelight, especially within the computer science community. This family of graphs present, in fact, the only obstructions to effective partitioning. Johnson graphs were also studied by  \cite{Diego2018problem} for their isoperimetric function.

Let $ G $ be a graph with vertex set $ V(G) $ and edge set $ E(G) $. \cite{Harary1983} proposed the notion of \emph{conditional connectivity} which asks for the size of a minimum vertex--cut $S$ of $G$, if it exists, so that $G-S$ is disconnected and every component of the resulting graph $G-S$ has some graph theoretical property $P$. We recall that a \emph{vertex--cut} $S$ of a graph $G$ is a set of vertices of $G$ whose deletion results in a disconnected graph or leaves an isolated vertex. A \emph{minimum vertex--cut} is one of smallest cardinality over all vertex--cuts of $G$ and the \emph{connectivity} $\kappa=\kappa(G)$ of $G$ is the size of a minimum vertex--cut.

Motivated by Harary's notion, many researchers studied various types of conditional connectivity. The case when the condition is that every resulting component is not an isolated vertex gave rise to what became to be known as the super--connectivity of a graph. More precisely, the \emph{super--connectivity} $\kappa'=\kappa'(G)$ of a graph $G$ is the size of a minimum vertex--cut $S$ such that $G-S$ has no isolated vertices. If such a vertex--cut exists, it is referred to as a \emph{super vertex--cut}; otherwise we write $ \kappa'(G) = +\infty $. The super--connectivity $\kappa'$ is of particular interest in cases when $G$ is super--connected, since otherwise $\kappa'=\kappa$. A graph $ G $ is \emph{super--connected} if every minimum vertex--cut is composed of the neighbourhood $ N_G(x) $ of a vertex $ x \in V(G) $, where  $ N_G(x) = \{ y \in V(G): xy \in E(G)\} $. Some examples of graph classes which have been analysed for their super--connectivity are circulant graphs (\cite{BoeschTindell1984}), hypercubes (\cite{Guo2018, YangMeng2009,YangMeng2010}), products of various graphs (see \cite{Boruzanli2016, Cao2014, Guo2015, LuEtAl2008}, and the references therein), generalized Petersen graphs (\cite{BoruzanliGauci-GP}), minimal Cayley graphs (\cite{Hamidoune1999iso}) and Kneser graphs (\cite{Balbuena2019, GauciBoruzanli}). In this work we analyse and establish the super--connectivity of Johnson graphs. Trivially for $ n=k$, the Johnson graph $ J(n,k) $ is composed of an isolated vertex; hence we consider $ n\geq k+1 $.

The applicability of Johnson graphs to the design of networks has further contributed towards their popularity within the scientific community, especially because they have a small diameter and high connectivity (see, for example, \cite{Bautista2013}). Johnson graphs are also both vertex--transitive and edge--transitive. Whereas vertex--transitivity permits the implementation of the same routing and communication schemes at each vertex (or node) of the network, edge--transitivity allows recursive constructions to be used. This is the main reason why symmetric graphs are usually preferred when modelling interconnection networks (\cite{Heydemann1997}). Another characteristic that is generally sought for in networks is regularity because this property simplifies the study of networks in terms of diameter and diameter vulnerability problems. Thus, our choice to study Johnson graphs has also a functional aspect as these graphs can be applied to network designs.

\cite{Watkins1970} showed that if $ G $ is an edge--transitive graph and if the minimum degree of $G$ is $\delta$, then $ \kappa(G)=\delta $.  \cite{Meng2003} proved that a connected vertex--transitive and edge--transitive graph is not super--connected if and only if it is isomorphic to the lexicographic product of a cycle $ C_n (n\geq6) $ or the line graph $ L(Q_3) $ of the cube $ Q_3 $ by a null graph $ N_m $. Furthermore, \cite{Brouwer2009vertex} proved that for a non--complete distance--regular graph of degree $ k $, the connectivity   {is equal to} $ k $, and the only disconnecting sets of size $ k $ are the sets of neighbours of a vertex. Note that Johnson graphs are distance--transitive and hence distance--regular.  The previous two arguments imply that Johnson graphs are super--connected. Consequently, this class of graphs is more important for application purposes as it has been argued by many that a network is more reliable if it is super--connected (see, for example, \cite{HsuLin2008}). As $ \kappa'(G) > \kappa(G) $ for super--connected graphs,  it is natural to ask what the super--connectivity of the Johnson graph $ J(n,k) $ is. Moreover, \cite{Van2014Dist} asked about the minimum number of vertices that need to be deleted to disconnect a distance-regular graph with diameter at least three such that each resulting component has at least two vertices (Problem 41). In Theorem 10, we verify their claimed value in the case of Johnson graphs.

In the next section, we consider the class of Johnson graphs with the smallest value of $k$ for which the super--connectivity exists, namely $J(n,k)$ when $k=2$. In Section \ref{Sect:last}, we determine the super--connectivity of $J(n,k)$ when $k\geq 3$. The line of thought used in the former case cannot be generalised to prove the latter case; hence the reason why we present the two results separately. To avoid confusion, a vertex $x$ of $J(n,k)$ corresponding to the $k$--subset $\{1,2,\ldots,k\}$  will be denoted by $x=z_1z_2\ldots z_k$, where $ z_1, \dots, z_k $ are referred to as the entries of $ x $. The \emph{Hamming distance}, denoted by $ H(x,y) $, is the number of entries that differ between two vertices $ x $ and $ y $ of $ J(n,k) $. Clearly, if $ x $ is adjacent to $ y $ in $ J(n,k) $, then $ H(x,y) = 1 $. For notation and terminology not defined here, we refer the reader to \cite{Chartrand2010}.
\section{The super--connectivity of $J(n,2)$} \label{Sect-mainResults}

It is easy to see that $J(3,2)$, $J(4,2)$ and $J(5,2)$ do not have a super vertex--cut. Therefore, in this section we discuss the super--connectivity of $J(n,2)$, where $ n\geq 6 $.

The result of the following theorem was also obtained by \cite{Cioabua2012conj}. The reason why we are including the proof below is two-fold. First, the proof by \cite{Cioabua2012conj} uses a different approach than the one adopted below, in that they conduct a case analysis based on cliques. Secondly, the proof below should assist the reader in getting accustomed to the notation being used in the subsequent proofs.
\begin{theorem}
	$ \kappa'(J(n,2)) =   3(n-3) $, where $ n\geq 6 $.
\end{theorem}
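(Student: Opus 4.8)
My plan is to establish the two matching inequalities $\kappa'(J(n,2)) \le 3(n-3)$ and $\kappa'(J(n,2)) \ge 3(n-3)$ separately, working throughout with the set of \emph{entries} of a vertex set: for $A \subseteq V(J(n,2))$ write $U(A) \subseteq [n]$ for the collection of symbols occurring as an entry of some pair in $A$. For the upper bound I would exhibit an explicit super vertex--cut of the right size. Take the triangle $T = \{12,13,23\}$ (the three vertices with both entries in $\{1,2,3\}$) and let $S = \{\,i\ell : i \in \{1,2,3\},\ \ell \in \{4,\dots,n\}\,\}$ be the set of pairs having exactly one entry in $\{1,2,3\}$. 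A direct count gives $|S| = 3(n-3)$, and one checks that $S$ is precisely the external neighbourhood $N(T)\setminus T$: a pair with both entries in $\{1,2,3\}$ lies in $T$, a pair with exactly one entry there lies in $S$, and a pair disjoint from $\{1,2,3\}$ has no neighbour in $T$. Hence $J(n,2) - S$ splits into $T$ and the copy of $J(n-3,2)$ induced by the pairs inside $\{4,\dots,n\}$; for $n \ge 6$ both parts have at least two vertices and contain no isolated vertex, so $S$ is a super vertex--cut and $\kappa'(J(n,2)) \le 3(n-3)$.

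For the lower bound I would suppose, for contradiction, that $S$ is a super vertex--cut with $|S| < 3(n-3)$, pick a component $C$ of $J(n,2)-S$, and set $R := V(J(n,2)) \setminus (S \cup C)$ (the union of the remaining components), both nonempty and free of isolated vertices. The main tool is the isoperimetric estimate $|S| \ge |N(C)| \ge u(n-u)$, where $u := |U(C)|$. This follows because a pair $v \notin C$ lies in $N(C)$ exactly when $v$ has an entry in $U(C)$, the number of pairs meeting $U(C)$ equals $\binom{n}{2} - \binom{n-u}{2}$, and since every vertex of $C$ has both entries in $U(C)$ we have $|C| \le \binom{u}{2}$; subtracting yields $|N(C)| \ge \binom{n}{2} - \binom{n-u}{2} - \binom{u}{2} = u(n-u)$. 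The same computation applied to $R$ gives $|S| \ge w(n-w)$ with $w := |U(R)|$.

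The decisive step --- and the part I expect to be the real obstacle to pin down cleanly --- is to control the size of $u$ and $w$. Solving the quadratic $u(n-u) < 3(n-3)$ shows its roots are $u=3$ and $u=n-3$, so for $n \ge 6$ the inequality forces $u \le 2$ or $u \ge n-2$; since a component with no isolated vertex contains at least two adjacent pairs and hence at least three distinct entries, we have $u \ge 3$ and therefore $u \ge n-2$, and symmetrically $w \ge n-2$. The finishing observation is that $U(C) \cap U(R) = \emptyset$: if some symbol $p$ occurred both in a pair $e \in C$ and in a pair $f \in R$, then $e$ and $f$ would intersect in exactly the entry $p$, making them adjacent in $J(n,2)-S$ and hence forcing $C$ and $R$ into the same component, a contradiction. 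Consequently $u + w \le |U(C) \cup U(R)| \le n$, so $2(n-2) \le n$, i.e.\ $n \le 4$, contradicting $n \ge 6$. Thus every super vertex--cut has size at least $3(n-3)$, which together with the construction gives $\kappa'(J(n,2)) = 3(n-3)$.
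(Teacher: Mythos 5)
Your proposal is correct; the upper bound is the same construction as the paper's, but your lower bound argument is genuinely different. The paper argues by contradiction as you do, but locally: it fixes two adjacent pairs in each of two components (with necessarily disjoint entry sets, WLOG $\{z_1z_2,z_1z_3\}$ and $\{z_{n-2}z_{n-1},z_{n-2}z_n\}$), notes that the nine common neighbours $\{z_\alpha z_\beta:\alpha\in\{1,2,3\},\ \beta\in\{n-2,n-1,n\}\}$ must lie in $S$ (which already kills $n=6$), and then runs a pigeonhole over the remaining $n-6$ symbols $i$: for each $i$, either all three pairs joining $\{1,2,3\}$ to $i$ or all three joining $i$ to $\{n-2,n-1,n\}$ must lie in $S$, since otherwise a surviving vertex of $A_i$ (which is forced into $C_1$) would be adjacent to a surviving vertex of $B_i$ (forced into $C_2$); this yields $|S|\ge 9+3(n-6)=3(n-3)$. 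You instead prove a global isoperimetric inequality: for a component $C$ spanning $u$ symbols, $|S|\ge|N(C)|\ge\binom{n}{2}-\binom{n-u}{2}-\binom{u}{2}=u(n-u)$, which is valid precisely because in $J(n,2)$ any pair outside $C$ that meets $U(C)$ is automatically adjacent to a vertex of $C$ (two distinct pairs sharing a symbol intersect in exactly that symbol). The downward parabola then forces $u\le 2$ or $u\ge n-2$; the no-isolated-vertex condition gives $u\ge 3$, so $u\ge n-2$, and the same applies to the union $R$ of the other components; since $U(C)\cap U(R)=\emptyset$ (a shared symbol would create an edge between components), you get $2(n-2)\le n$, a contradiction. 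Your route handles all $n\ge 6$ uniformly with no special case and no WLOG normalisation, and the bound $|N(C)|\ge u(n-u)$ is a clean, reusable tool; the paper's route is more hands-on and explicitly exhibits which vertices of $S$ are forced, which it states was chosen partly to acclimatise the reader to the entry notation used for the $k\ge 3$ case.
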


\begin{proof}
	Let $ \mathcal{G}=J(n,2) $, where $ n\geq 6 $. Let $ S$ be a super vertex--cut of $ \mathcal{G} $ and suppose, for contradiction, that $ |S|<3(n-3) $.
	
	Since $S$ is a super vertex--cut, each component of $\mathcal{G}-S$ contains at least two adjacent vertices sharing a common entry. Without loss of generality, assume that a component, say $ C_1 $, contains the two adjacent vertices $ z_1z_2 $ and $ z_1z_3 $. The two adjacent vertices of another component, say $ C_2 $, cannot share any common entries with the vertices of $ C_1 $, so, without loss of generality, let $ z_{n-2}z_{n-1} $ and $ z_{n-2}z_{n} $ be two adjacent vertices in $ C_2 $. Note that every vertex--cut contains the common neighbours of the vertices in different components. Letting $ S' = \{z_{\alpha}z_{\beta}: \alpha\in \{1,2,3\} \textrm{ and } \beta\in \{n-2,n-1,n\}\}$, then $ S $ contains $ S' $. If $n=6$, then $|S'|=9>|S|$ and we get the required contradiction. Thus, in the sequel we can assume that $n\geq 7$.
	
	We now consider the vertices in the following two sets:
	\begin{enumerate}[$\bullet$]\setlength{\itemsep}{-5pt}
		\item[] $A=\Big\{ z_{\alpha}z_{\gamma} : \alpha \in \{1,2,3\} \textrm{ and } \gamma \notin \{1,2,3,{n-2},{n-1},n\} \Big\}$, and \vspace{0.2cm}
		\item[] $B=\Big\{ z_{\delta}z_{\beta}  : \delta \notin \{1,2,3,{n-2},{n-1},n\}\textrm{ and } \beta \in \{{n-2},{n-1},n\}\Big\} $.
	\end{enumerate}
	
	For every $ i\in \{4,5,\dots,n-3\} $, let $ A_i \subseteq A$ and $ B_i \subseteq B $ be such that  $ A_i = \left\{ z_{\alpha}z_i : \alpha \in \{1,2,3\} \right\}$ and $ B_i = \left\{ z_iz_{\beta} : \beta \in \{{n-2},{n-1},n\} \right\}$. Thus $ |A_i|=|B_i|=3 $, for all $ i \in \{4,5,\dots,n-3\} $ and $ |A|=|B|=3(n-6) $. We note that each vertex of $ A $ not in $ S $ is adjacent to some vertex in $ C_1 $ and, similarly, each vertex of $ B $ not in $ S $ is adjacent to some vertex in $ C_2 $. Also, each vertex of $ A_i $ is adjacent to each vertex of $ B_i $.
	
	If at least $ 3(n-6) $ vertices of $ A \cup B $ are in $ S $, then
	$|S| \geq |S'| + 3(n-6)=3(n-3)$, a contradiction. Thus, at most $3(n-6)-1$ vertices of $A\cup B$ are in $S$, implying that at least $ 3(n-6)+1 $ vertices of $ A\cup B $ are in $ \mathcal{G}-S $. Thus, there is at least one $ i\in \{4,5,\dots,n-3\} $ for which at least a vertex of both $ A_i $ and $ B_i $ is in $\mathcal{G}-S$, and hence there is an edge connecting $ C_1 $ and $ C_2$, a contradiction. Therefore, $ |S| \geq 3(n-3) $.

	Finally, if we consider any three distinct entries $ \{z_i,z_j,z_k\}$ for $ i,j,k \in [n] $, the set $ \{z_{\alpha}z_{\beta}: \alpha\in \{i,j,k\} \textrm{ and } \beta \in [n]\setminus \{i,j,k\} \}$ is a vertex--cut of $ \mathcal{G} $ which does not create isolated vertices, and hence forms a super vertex--cut of $ \mathcal{G} $ of cardinality $ 3(n-3) $.
	
\end{proof}

\section{The super--connectivity of $J(n,k)$ for $k\geq 3$} \label{Sect:last}

\cite{Daven1999} gave a proof that does not rely on the edge--transitivity property of Johnson graphs to show that the connectivity of $J(n,k)$ is equal to the degree. This proof involves induction and is based on the observation that the graph $ J(n,k) - \beta_r $ is isomorphic to $ J(n-1,k) $ for any $ r \in [n] $, where $ \beta_r $ is the set of all vertices containing the entry $ z_r $. We adopt the general approach used by \cite{Daven1999} to establish $\kappa'(J(n,k))$, although in our case the work required is  more involved due to the nature of the parameter being studied. To simplify the notation used, in the sequel  we shall denote by $x_i^j$ the vertex obtained from the vertex $x$ by removing the entry $z_i$ from $x$ and introducing a new entry $z_j$ which is not in $x$. For instance, if $x=z_1z_2\ldots z_k$, then $x_1^{k+1}=z_2\ldots z_kz_{k+1}$. This process can be repeated iteratively in such a way that $x_{i,h}^{j,\ell}=\left(x_i^j\right)_h^{\ell}$, where the entry $ z_h $ is in $x_i^j$ and the entry $ z_\ell $ is not in $x_i^j$.

It is useful to note that a minimum super vertex--cut $S$ of a connected graph $G$ contains a vertex $v$ which has at least one neighbour in some component of $G-S$, since otherwise $G$ is disconnected. Suppose now that there is a component of $G-S$ that does not contain any neighbour of this vertex $v$. Then the set of vertices $T=S\setminus\{v\}$ is also a super vertex--cut of $G$ because $G-T$ is disconnected and contains no isolated vertices. However, this contradicts the minimality of $S$, and hence the following remark follows immediately.

\begin{remark} \label{Rem-NborEveryComp}
	A minimum super vertex--cut $S$ of $G$ contains a vertex having at least one neighbour in every component of $G-S$. Moreover, if a vertex $v$ in a minimum super vertex--cut $S$ of $G$ has a neighbour in one component of $G-S$, then it has at least one neighbour in every component of $G-S$.
\end{remark}

We require Lemmas \ref{lemma3}, \ref{lemma4} and \ref{lemma5} in the rest of this work, the proofs of which can be adapted from \cite{Daven1999} and hence we omit them. Although these results were originally proved for the connectivity of $J(n,k)$, we observe that the proofs are very similar in the case of the super--connectivity. Whereas in the original connectivity version the argument used in the proofs revolves around the fact that every vertex in the vertex--cut $S$ has a neighbour in each of the components of $G-S$, in the case of super--connectivity the main tool required is given by Remark \ref{Rem-NborEveryComp} above, but the rest of the proof then follows. For the sake of consistency, we have also changed the notation in the statements to be in line with ours.

\begin{lemma}  \label{lemma3}
	If $ S $ is a minimum super vertex--cut of $ J(n,k) $, where  $k\geq 3$ and
	$n\geq k+3$, then    the entry $ z_r $ is contained in at least one vertex of $ S $ for every $ r\in [n] $.
\end{lemma}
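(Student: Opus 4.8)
The plan is to argue by contradiction: suppose that some entry, say $z_r$, is contained in no vertex of $S$. Following the decomposition used in \cite{Daven1999}, let $\beta_r$ denote the set of all vertices of $J(n,k)$ whose $k$--subset contains the entry $z_r$. The assumption that $z_r$ appears in no vertex of $S$ is precisely the statement that $S\cap\beta_r=\emptyset$, so every vertex of $\beta_r$ survives in $J(n,k)-S$.

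First I would identify the structure of $\beta_r$. Writing each vertex of $\beta_r$ as $\{z_r\}\cup A$ for a $(k-1)$--subset $A$ of $[n]\setminus\{z_r\}$, two such vertices $\{z_r\}\cup A$ and $\{z_r\}\cup B$ are adjacent in $J(n,k)$ exactly when $|A\cap B|=k-2$. Hence the subgraph of $J(n,k)$ induced by $\beta_r$ is isomorphic to $J(n-1,k-1)$, which is connected since $n-1\geq k+2>k-1\geq 2$. Because none of its vertices lies in $S$, all of $\beta_r$ is contained in a single component $C$ of $J(n,k)-S$.

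Next I would show that every surviving vertex is forced into $C$. Let $w$ be any vertex not in $\beta_r$, that is, a $k$--subset avoiding $z_r$. For each entry $z_s$ of $w$, the vertex $(w\setminus\{z_s\})\cup\{z_r\}$ lies in $\beta_r$ and satisfies $|w\cap((w\setminus\{z_s\})\cup\{z_r\})|=k-1$, so it is adjacent to $w$; thus $w$ has $k$ neighbours, all of which lie in $\beta_r$ and all of which survive in $J(n,k)-S$. Consequently, if $w\notin S$ then $w$ is joined to $C$, and therefore $w\in C$. Combining this with the previous step, every vertex of $J(n,k)-S$ lies in $C$, so $J(n,k)-S$ is connected. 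This contradicts the fact that $S$, being a super vertex--cut, disconnects $J(n,k)$, and the lemma follows.

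The argument is essentially structural, and the only points requiring care are two routine verifications: the isomorphism $\beta_r\cong J(n-1,k-1)$ together with its connectivity, and the observation that every vertex outside $\beta_r$ has all of its ``swap'' neighbours inside $\beta_r$. I do not anticipate a genuine obstacle here. The super vertex--cut hypothesis enters only through the fact that $S$ disconnects the graph; indeed the conclusion holds for any vertex--cut, and in the full adaptation of the connectivity proof of \cite{Daven1999} it is Remark \ref{Rem-NborEveryComp} that plays the role of the standard minimum--cut property.
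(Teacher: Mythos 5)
Your proof is correct. A direct comparison is slightly delicate because the paper gives no explicit proof of this lemma: it only states that the argument can be adapted from \cite{Daven1999}, with Remark \ref{Rem-NborEveryComp} (the property of \emph{minimum} super vertex--cuts) serving as the replacement for the minimum--cut property used there. Your argument rests on the same structural decomposition that drives the paper's whole Section \ref{Sect:last} --- the set $\beta_r$ of vertices containing $z_r$ induces a copy of $J(n-1,k-1)$ --- but it is genuinely more elementary and more general than what the paper gestures at: you never invoke the minimality of $S$, nor Remark \ref{Rem-NborEveryComp}, nor even the absence of isolated vertices in $J(n,k)-S$; the only property you use is that $S$ disconnects the graph (which a super vertex--cut must do, since the ``isolated vertex'' alternative in the paper's definition of vertex--cut is excluded by the super condition). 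In effect you prove the stronger statement that \emph{every} disconnecting set of $J(n,k)$ meets every $\beta_r$, from which the lemma is immediate; this is a worthwhile simplification, since it decouples Lemma \ref{lemma3} from the minimality machinery that Lemmas \ref{lemma4} and \ref{lemma5} really do need. The two verifications you flag are both routine and correct: adjacency inside $\beta_r$ corresponds exactly to $(k-1)$--subsets meeting in $k-2$ elements, and $J(n-1,k-1)$ is connected in the stated range. One cosmetic slip: the phrase ``$w$ has $k$ neighbours, all of which lie in $\beta_r$'' should read ``$w$ has $k$ neighbours \emph{in} $\beta_r$'' (of course $w$ has $k(n-k)$ neighbours in total); this does not affect the argument, since only the existence of one surviving neighbour of $w$ inside the component containing $\beta_r$ is needed.
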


\begin{lemma}  \label{lemma4}
	If $ S $ is a minimum super vertex--cut of $  J(n,k) $, where  $k\geq 3$ and
	$n\geq k+3$, then no entry $ z_r $ is contained in every vertex of $ S $, where $ r\in [n]$.
\end{lemma}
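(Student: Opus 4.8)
The plan is to argue by contradiction. Suppose some entry $z_r$ is contained in every vertex of $S$, so that $S\subseteq\beta_r$, where $\beta_r$ denotes the set of all vertices of $J(n,k)$ containing $z_r$. The central structural fact I would exploit is the observation recalled from \cite{Daven1999} that $J(n,k)-\beta_r\cong J(n-1,k)$. Since $n\geq k+3$ gives $n-1\geq k+2>k$, this graph $J(n-1,k)$ is connected. Because $S\subseteq\beta_r$, removing $S$ from $J(n,k)$ deletes none of the vertices that avoid the entry $z_r$; hence all such vertices survive in $J(n,k)-S$ and, being connected among themselves, lie in a single component $D$ of $J(n,k)-S$.

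Next I would use the fact that $S$ is a vertex--cut, so that $J(n,k)-S$ is disconnected and therefore possesses a component $C\neq D$. By the previous paragraph, every vertex lying in a component other than $D$ must contain the entry $z_r$; in particular every vertex of $C$ belongs to $\beta_r$. The contradiction is then obtained by showing that $C$ cannot be separated from $D$ without deleting vertices outside $\beta_r$, which $S$ does not do. Concretely, I would fix any $v\in C$. Since $v$ contains $z_r$ and $n-k\geq 3$ entries of $[n]$ lie outside $v$, the vertex $v_r^{j}$ obtained by replacing $z_r$ with any entry $z_j\notin v$ is a neighbour of $v$ that does not contain $z_r$.

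Finally, this neighbour $w=v_r^{j}$ is not in $S$ (as $S\subseteq\beta_r$ while $w\notin\beta_r$), so $w\in D$; but then the edge $vw$ forces $v\in D$, contradicting $v\in C\neq D$. This contradiction establishes that no entry can appear in every vertex of $S$. I expect the argument to be short and largely routine once the two supporting facts are in place; the only points requiring care are verifying that $J(n-1,k)$ is genuinely connected under the hypothesis $n\geq k+3$ (so that all vertices avoiding $z_r$ really do collapse into the single component $D$) and confirming that every vertex of $\beta_r$ has a neighbour outside $\beta_r$, which is precisely where the bound $n-k\geq 3$ is used to guarantee the existence of the entry $z_j$.
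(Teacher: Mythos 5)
Your proof is correct, but it cannot be compared line-by-line with the paper's, because the paper never writes one out: Lemmas \ref{lemma3}--\ref{lemma5} are deferred to an adaptation of the connectivity arguments in \cite{Daven1999}, with Remark \ref{Rem-NborEveryComp} (a vertex of a minimum super vertex--cut with a neighbour in one component has a neighbour in every component) flagged as the key tool of that adaptation. Your route bypasses that machinery entirely: it uses only the isomorphism $J(n,k)-\beta_r\cong J(n-1,k)$ (which the paper itself recalls), the connectivity of $J(n-1,k)$, and the observation that every surviving vertex of $\beta_r$ retains a neighbour outside $\beta_r$. In fact you prove something stronger than the stated lemma: any set $S\subseteq\beta_r$ fails to disconnect $J(n,k)$ at all, so no vertex--cut whatsoever---minimum or not, super or not---can be contained in a single $\beta_r$; neither minimality nor the no-isolated-vertex condition enters your argument, only the fact that $J(n,k)-S$ is disconnected, which is guaranteed by the definition of a super vertex--cut. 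Your bookkeeping of the hypotheses is also sound, and indeed slightly generous: $n\geq k+2$ would already suffice, since $n-1\geq k+1$ makes $J(n-1,k)$ connected and $n-k\geq 2$ guarantees a replacement entry $z_j\notin v$. What each approach buys: the paper's (omitted) adaptation keeps all three lemmas under one uniform proof scheme inherited from \cite{Daven1999}, whereas your argument is self-contained, shorter, and makes transparent that Lemma \ref{lemma4} is really a statement about arbitrary disconnecting sets rather than about minimum super vertex--cuts.
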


\begin{lemma}  \label{lemma5}
	Let $ S $ be a minimum super vertex--cut of $ \mathcal{G} = J(n,k) $, where  $k\geq 3$ and
	$n\geq k+3$. Denote a smallest component of $ \mathcal{G}-S $ by $ C $ and let $ C_*=(\mathcal{G}-S)-C $. There exists an entry $ z_r $, for $ r\in [n] $, such that $ C - \beta_r \neq \emptyset $ and $ C_* - \beta_r \neq \emptyset $.
\end{lemma}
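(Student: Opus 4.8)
The plan is to prove Lemma \ref{lemma5} by contradiction, assuming that every entry $z_r$ with $r \in [n]$ is contained in every vertex of at least one of the two sets $C$ and $C_*$. Under this assumption, I would partition the entries $[n]$ into two classes: let $R$ be the set of entries contained in every vertex of $C$, and let $R_*$ be the set of entries contained in every vertex of $C_*$. The negated conclusion says $R \cup R_* = [n]$. The first step is to observe that if two vertices $u, v$ both contain a fixed entry $z_r$ in \emph{all} vertices of a component, this forces the component to be highly constrained: every vertex of $C$ is a $k$-subset containing the full set $R$, so $|R| \le k$, and likewise $|R_*| \le k$.

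Next I would exploit the fact that $C$ is a \emph{smallest} component and is required (being part of a super vertex--cut) to contain at least two adjacent vertices. If $|R| = k$, then $R$ itself determines the $k$-subset uniquely, forcing $C$ to be a single vertex, contradicting that $C$ has no isolated vertex. Hence $|R| \le k-1$, and symmetrically $|R_*| \le k-1$. Combined with $R \cup R_* = [n]$ this yields $n = |R \cup R_*| \le |R| + |R_*| \le 2(k-1) = 2k-2$. Since the hypothesis stipulates $n \ge k+3$, I would then compare these bounds: the inequality $k+3 \le n \le 2k-2$ is only possible when $k \ge 5$, so the small cases $k \in \{3,4\}$ are dispatched immediately, and for the remaining range I need a sharper argument.

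To close the general case I would invoke Lemma \ref{lemma4}, which guarantees that no entry is contained in \emph{every} vertex of $S$, together with Lemma \ref{lemma3} and Remark \ref{Rem-NborEveryComp}. The key idea is that an entry $z_r \in R \cap R_*$ (the two classes must overlap when $|R|+|R_*| > n$, but here we want the opposite regime) behaves pathologically: a vertex of $C$ and a vertex of $C_*$ sharing all of $R \cap R_*$ differ only in the complementary positions, and I would construct an explicit path between a vertex of $C$ and a vertex of $C_*$ that avoids $S$, contradicting the separation. Concretely, take adjacent vertices $u_1, u_2 \in C$ and $w_1, w_2 \in C_*$; since $R$ lies in every $C$-vertex and $R_*$ in every $C_*$-vertex, any $C$-vertex and any $C_*$-vertex agree on $R \cap R_*$, so I can walk from $u_1$ to some $w_j$ by swapping entries one at a time, and I would show this walk can be routed through vertices not forced into $S$ by Lemma \ref{lemma4}.

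The main obstacle I anticipate is the final routing argument: ensuring the constructed connecting walk genuinely avoids $S$ rather than merely existing in the ambient graph. The combinatorial bookkeeping of which intermediate $k$-subsets are free of $S$ relies delicately on Lemmas \ref{lemma3} and \ref{lemma4}, and the interplay between the size constraints $|R|, |R_* | \le k-1$ and the adjacency structure is where the real work lies. I would therefore structure the proof so that the counting bound $n \le 2k-2$ handles the easy regime, and a careful adjacency/path construction, leaning on Remark \ref{Rem-NborEveryComp} to guarantee that cut-vertices have neighbours in both components, delivers the contradiction in the remaining cases.
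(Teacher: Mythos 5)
Your first half is sound: defining $R$ as the set of entries contained in every vertex of $C$ and $R_*$ as those contained in every vertex of $C_*$, the negation of the lemma is exactly $R\cup R_*=[n]$; the super vertex--cut condition (no component is a single vertex) gives $|R|,|R_*|\leq k-1$, hence $n\leq 2k-2$, which indeed settles $k\in\{3,4\}$ since $k+3\leq 2k-2$ forces $k\geq 5$. But for $k\geq 5$ you have only a plan, not a proof, and the plan cannot work as stated. You propose to ``construct an explicit path between a vertex of $C$ and a vertex of $C_*$ that avoids $S$, contradicting the separation.'' No such path can exist: $C$ and $C_*$ are (unions of) components of $\mathcal{G}-S$, so by definition \emph{every} path in $\mathcal{G}$ from $C$ to $C_*$ passes through $S$. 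Showing that the intermediate vertices of your walk are not \emph{forced} into $S$ by Lemmas \ref{lemma3} and \ref{lemma4} is not the same as showing they are not in $S$; those lemmas only constrain which entries appear among the vertices of $S$, and never certify that a particular vertex lies outside $S$. So the contradiction in the remaining (and main) regime is never derived, and the route you indicate for deriving it is a dead end.

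The gap can be closed for all $k\geq 3$ at once --- making the case split and even the bounds $|R|,|R_*|\leq k-1$ unnecessary --- by using Remark \ref{Rem-NborEveryComp} quantitatively rather than as a routing device. Under the negation, take any $u\in V(C)$ and $w\in V(C_*)$. Since $u\supseteq R$, $w\supseteq R_*$ and $R\cup R_*=[n]$, we have $u\cup w=[n]$, so $|u\cap w|=|u|+|w|-n=2k-n\leq k-3$ because $n\geq k+3$. On the other hand, by Remark \ref{Rem-NborEveryComp} some vertex $s\in S$ has a neighbour in every component of $\mathcal{G}-S$; pick a neighbour $u$ of $s$ in $C$ and a neighbour $w$ of $s$ in a component of $C_*$. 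Then
$$|u\cap w|\;\geq\;|u\cap s|+|w\cap s|-|s|\;=\;(k-1)+(k-1)-k\;=\;k-2,$$
contradicting $|u\cap w|\leq k-3$. This is precisely the adaptation of the Daven--Rodger connectivity argument that the paper alludes to when it omits the proof and identifies Remark \ref{Rem-NborEveryComp} as the main tool replacing the ``every cut vertex has a neighbour in each component'' property.
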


The upper bound for the super--connectivity of $J(n,k)$ is given in Lemma \ref{Lem:UB-J(n,k)}, which will then be used in Theorems \ref{Theo:k=3} and \ref{Theo:main} to establish the equality.

\begin{lemma} \label{Lem:UB-J(n,k)}
	$\kappa'(J(n,k))\leq (2k-1)(n-k)-k$ for $k\geq 3$ and
	$n\geq k+3$.
\end{lemma}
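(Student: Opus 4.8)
The plan is to prove the bound by exhibiting an explicit super vertex--cut of $\mathcal{G}=J(n,k)$ whose cardinality is exactly $(2k-1)(n-k)-k$. I would fix the two adjacent vertices $x=z_1z_2\cdots z_{k-1}z_k$ and $y=z_1z_2\cdots z_{k-1}z_{k+1}$, which share the $k-1$ entries $z_1,\dots,z_{k-1}$, and set
$$S=\big(N_{\mathcal{G}}(x)\cup N_{\mathcal{G}}(y)\big)\setminus\{x,y\},$$
that is, the set of all vertices adjacent to $x$ or to $y$ except $x$ and $y$ themselves. The intention is that deleting $S$ severs $\{x,y\}$ from the rest of the graph while leaving the edge $xy$ intact, so that $\{x,y\}$ becomes a component of $\mathcal{G}-S$ consisting of two non--isolated vertices.

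First I would compute $|S|$. Each of $x$ and $y$ has degree $k(n-k)$, and the number of their common neighbours is $n-2$: such a common neighbour is either of the form $z_1\cdots z_{k-1}z_j$ with $z_j\notin\{z_1,\dots,z_{k+1}\}$ (giving $n-k-1$ choices), or it contains both $z_k$ and $z_{k+1}$ together with all but one of $z_1,\dots,z_{k-1}$ (giving $k-1$ choices). By inclusion--exclusion, $|N_{\mathcal{G}}(x)\cup N_{\mathcal{G}}(y)|=2k(n-k)-(n-2)$, and since $x\in N_{\mathcal{G}}(y)$ and $y\in N_{\mathcal{G}}(x)$, removing the two vertices $x,y$ yields
$$|S|=2k(n-k)-(n-2)-2=2k(n-k)-n=(2k-1)(n-k)-k,$$
which matches the claimed value exactly.

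It then remains to check that $S$ really is a super vertex--cut. That $\{x,y\}$ is a component of $\mathcal{G}-S$ is immediate from the construction. Writing $N_{\mathcal{G}}[x]=N_{\mathcal{G}}(x)\cup\{x\}$ for the closed neighbourhood and $R=\mathcal{G}-\big(N_{\mathcal{G}}[x]\cup N_{\mathcal{G}}[y]\big)$ for the remaining part, I would first note that $R$ is non--empty using $n\geq k+3$: since $x\cup y$ uses only $k+1$ entries, there are at least two further entries $z_{k+2},z_{k+3}$, and any vertex containing both of these together with $k-2$ other entries meets each of $x$ and $y$ in at most $k-2$ entries, hence lies in $R$.

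The hard part will be showing that no vertex of $R$ is isolated in $\mathcal{G}-S$, since a priori a vertex $w\in R$ could have all $k(n-k)$ of its neighbours inside $N_{\mathcal{G}}[x]\cup N_{\mathcal{G}}[y]$. The key observation is a distance argument: if a neighbour $w'$ of $w$ is also adjacent to $x$, then $\partial(w,x)\leq 2$ by the triangle inequality, so $w$ can have neighbours in $N_{\mathcal{G}}(x)$ only when $\partial(w,x)=2$, in which case $w$ and $x$ have exactly $4$ common neighbours (the distance--$2$ intersection number $c_2=2^2$ of the Johnson graph). The same bound applies to $y$, so at most $4+4=8$ neighbours of $w$ lie in $N_{\mathcal{G}}[x]\cup N_{\mathcal{G}}[y]$ (recalling that $x,y\notin N_{\mathcal{G}}(w)$). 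Since $k\geq 3$ and $n-k\geq 3$ give $k(n-k)\geq 9>8$, the vertex $w$ must retain a neighbour inside $R$ and is therefore not isolated. This confirms that $S$ disconnects $\mathcal{G}$ without creating isolated vertices, whence $\kappa'(J(n,k))\leq |S|=(2k-1)(n-k)-k$.
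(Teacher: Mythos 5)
Your proposal is correct and takes essentially the same approach as the paper: both exhibit the cut $S=\big(N(x)\cup N(y)\big)\setminus\{x,y\}$ for an edge $xy$ and compute $|S|=(2k-1)(n-k)-k$, the paper by partitioning $S$ into common and exclusive neighbours, you by inclusion--exclusion on the $n-2$ common neighbours. If anything, your verification that $S$ is a super vertex--cut is more complete than the paper's, which merely asserts that no vertex of $J(n,k)-S$ can have all its neighbours in $S$; your use of $n\geq k+3$ to show the remainder is non-empty, and of the intersection number $c_2=4$ to bound the deleted neighbours of any remaining vertex by $8<k(n-k)$, supplies exactly the details the paper leaves as ``easy to see.''
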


\begin{proof}
	Consider any two adjacent vertices in $J(n,k)$, say $x=z_1z_2\ldots z_k$ and $x_k^{k+1}$, and let $S$ be the set of all their neighbours. Then $S$ is the union of the following mutually disjoint sets of vertices:
	\begin{itemize}
		\item the set $S_1$ composed of the common neighbours of $x$ and $x_k^{k+1}$, that is
		$$S_1=\Big\{x_i^{k+1} : i\in\{1,\ldots,k-1\}\Big\} \cup \Big\{x_k^j : j\in\{k+2,\ldots,n\}\Big\};$$
		\item the set $S_2$ composed of the neighbours of $x$ which are not also neighbours of $x_k^{k+1}$, that is
		$$S_2=\left\{x_i^j : i\in\{1,\ldots,k-1\} {\rm ~~and~~} j\in\{k+2,\ldots,n\}\right\};$$
		\item the set $S_3$ composed of the neighbours of $x_k^{k+1}$ which are not also neighbours of $x$, that is
		$$S_3=\left\{x_{i,k}^{k+1,j} : i\in\{1,\ldots,k-1\} {\rm ~~and~~} j\in\{k+2,\ldots,n\}\right\}.$$
	\end{itemize}
	
	Thus,
	\begin{align*}
	|S| &= |S_1|+|S_2|+|S_3|\\
	&= \left((k-1)+(n-k-1)\right)+2(k-1)(n-k-1)=(2k-1)(n-k)-k.
	\end{align*}
	
	It is easy to see that there is no vertex in $ J(n,k)-S $ that is adjacent to $ k(n-k) $ vertices in $ S $, and hence $ J(n,k)-S $ has no isolated vertices. Thus, $ S $ is a super vertex-cut and the bound follows.
\end{proof}

The following two lemmas provide the essential tools required in proving the two main theorems in this section, namely Theorems \ref{Theo:k=3} and \ref{Theo:main}.

\begin{lemma} \label{Lem:AllNborsContainR}
	Let $S$ be a minimum super vertex--cut of $\mathcal{G}=J(n,k)$, where $k\geq 3$ and $n\geq k+3$. If there is a vertex $x\in V(\mathcal{G}-S)-\beta_r$, for some $r\in [n]$, such that $N_{\mathcal{G}-S}(x)-\beta_r = \emptyset$, then $|S|\geq (2k-1)(n-k)-k$.
\end{lemma}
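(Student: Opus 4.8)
The plan is to split $S$ according to whether its vertices contain the entry $z_r$, writing $|S|=|S\cap\beta_r|+|S\setminus\beta_r|$, and to bound the two parts separately using the two induced sub-Johnson-graphs hanging off the entry $z_r$. Recall from the discussion preceding Lemma~\ref{lemma3} that $\mathcal{G}-\beta_r\cong J(n-1,k)$; dually, the vertices containing $z_r$ induce a copy of $J(n-1,k-1)$, since deleting the common entry $z_r$ from each of them gives a bijection onto ${{[n]\setminus\{r\}}\choose{k-1}}$ under which adjacency is preserved. The degrees of these two graphs are exactly $k(n-k-1)$ and $(k-1)(n-k)$, and these two numbers sum to the target $(2k-1)(n-k)-k$, which is what makes the split natural.

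First I would dispose of the easy half. Since $x\notin\beta_r$ and, by hypothesis, every neighbour of $x$ lying in $\mathcal{G}-S$ contains $z_r$, \emph{all} neighbours of $x$ that avoid $z_r$ must lie in $S$. These are precisely the neighbours of $x$ inside the copy $\mathcal{G}-\beta_r\cong J(n-1,k)$, and there are $k(n-k-1)$ of them; equivalently, $x$ is an isolated vertex of $(\mathcal{G}-\beta_r)-(S\setminus\beta_r)$. Hence $|S\setminus\beta_r|\ge k(n-k-1)$, with no further work.

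The substance is to show $|S\cap\beta_r|\ge(k-1)(n-k)$. Let $C$ be the component of $\mathcal{G}-S$ containing $x$ and fix a surviving neighbour $y\in N_{\mathcal{G}-S}(x)$, which exists because $S$ has no isolated vertices and which, by hypothesis, lies in $\beta_r$. The clean case is when some component of $\mathcal{G}-S$ other than $C$ also meets $\beta_r$: then $y$ and that other vertex are two vertices of $\beta_r\setminus S$ lying in distinct components of $\mathcal{G}-S$, hence in distinct components of the induced graph $\beta_r-(S\cap\beta_r)$ as well, since any path in the latter is a path in $\mathcal{G}-S$. Thus $S\cap\beta_r$ is a vertex--cut of $\beta_r\cong J(n-1,k-1)$. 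Since $k\ge 3$ this graph is non--complete and connected, so its connectivity equals its degree $(k-1)(n-k)$, giving $|S\cap\beta_r|\ge(k-1)(n-k)$ and hence $|S|\ge(2k-1)(n-k)-k$.

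The main obstacle is the complementary case, in which every component of $\mathcal{G}-S$ other than $C$ avoids the entry $z_r$ entirely; then $\beta_r\setminus S$ sits inside the single component $C$ and the separation argument above does not apply. Here I would argue through such an $r$--free component $C'$. On one hand, each vertex of $C'$ has all $k$ of its ``insert $z_r$'' neighbours forced into $S\cap\beta_r$, so a large $C'$ forces $S\cap\beta_r$ to be large via a shadow estimate in $J(n-1,k-1)$. On the other hand, a small $C'$ has a large boundary: for instance if $C'$ is a single edge, its boundary in $\mathcal{G}$ has exactly $(2k-1)(n-k)-k$ vertices, all in $S$, by the very computation of Lemma~\ref{Lem:UB-J(n,k)}. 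To cover the intermediate range cleanly I would combine the two copies: in this case $S\setminus\beta_r$ is a separator of $\mathcal{G}-\beta_r\cong J(n-1,k)$ that isolates $x$ and also fences off $C'$, and since the only separators of $J(n-1,k)$ of the minimum size $k(n-k-1)$ are single--vertex neighbourhoods, either $|S\setminus\beta_r|$ already exceeds $k(n-k-1)$, buying back the deficit, or $S\setminus\beta_r=N(x)$ and $C'$ is forced to be the whole remaining part of the copy, which is large and returns us to the shadow estimate. I expect reconciling this intermediate regime — quantifying exactly how much slack the boundary of $C'$ contributes relative to the $(k-1)(n-k)$ still owed — to be the delicate and decisive step.
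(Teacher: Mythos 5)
Your decomposition $|S|=|S\cap\beta_r|+|S\setminus\beta_r|$ starts well: the bound $|S\setminus\beta_r|\geq k(n-k-1)$ is exactly the paper's first step (its set $S_1=N_{\mathcal{G}}(x)-\beta_r$), and your ``clean case'' is a genuinely different and elegant argument --- when a second component of $\mathcal{G}-S$ meets $\beta_r$, the set $S\cap\beta_r$ is a vertex--cut of the induced copy $\beta_r\cong J(n-1,k-1)$, whose connectivity equals its degree $(k-1)(n-k)$, and the two bounds add up. That part is correct. The problem is that your ``complementary case'' (every component other than $C$ avoids $\beta_r$) is not an edge case to be mopped up; it is the heart of the lemma, and your argument for it has a genuine hole. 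The dichotomy you propose --- ``either $|S\setminus\beta_r|$ already exceeds $k(n-k-1)$, buying back the deficit, or $S\setminus\beta_r=N(x)$'' --- is false as stated: the uniqueness of minimum cuts in $J(n-1,k)$ (Brouwer--Koolen) only applies when $|S\setminus\beta_r|$ equals $k(n-k-1)$ exactly, and exceeding that minimum by one vertex buys back one vertex, not the $(k-1)(n-k)$ still owed. For the entire intermediate range $k(n-k-1)<|S\setminus\beta_r|<k(n-k-1)+(k-1)(n-k)$ you have no argument at all. Moreover, the ``shadow estimate'' that is supposed to finish both the large-$C'$ case and the extreme case $S\setminus\beta_r=N(x)$ is never proved: from a single edge of $C'$ the forced set in $S\cap\beta_r$ has size only $2k-1$, so getting from there to $(k-1)(n-k)$ requires a quantitative vertex-isoperimetric statement in the Johnson graph that you have not supplied. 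You flag this yourself as ``the delicate and decisive step,'' which is an accurate self-assessment: the proof is incomplete precisely there.

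For comparison, the paper avoids any case distinction on how the other components meet $\beta_r$. It uses Remark \ref{Rem-NborEveryComp}: since $x_k^{k+1}\in S_1$ lies in a \emph{minimum} super vertex--cut and has a neighbour in $C_1$ (the component of $x$), it must have a neighbour $w$ in a second component $C_2$, and a short argument pins down $w=x_{k-1,k}^{k+1,k+2}$ up to relabelling --- in particular $w$ exists and avoids $z_n$ whether or not $C_2$ meets $\beta_n$. It then explicitly constructs $(k-1)(n-k)$ internally disjoint paths from $\{x,x_k^n\}\subseteq C_1$ to $w\in C_2$ whose internal vertices avoid $S_1$; each path must be severed by a distinct vertex of $S\setminus S_1$, giving $|S|\geq k(n-k-1)+(k-1)(n-k)$ in one stroke. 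If you want to salvage your two-copy decomposition, it is this Menger-style construction (or an honest isoperimetric inequality for $r$-free components) that has to replace the deficit/shadow hand-waving in your hard case.
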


\begin{proof}
	For $k\geq 3$ and $n\geq k+3$ , let $S$ be a minimum super vertex--cut of $\mathcal{G}=J(n,k)$. We fix a value of $r$, say $r=n$ and let $x=z_1z_2\ldots z_k\in V(\mathcal{G}-S)-\beta_n$. We note that if $N_{\mathcal{G}-S}(x)-\beta_n=\emptyset$, then all the neighbours of $x$ in the graph $\mathcal{G}-S$ contain the entry $z_n$, that is,
	\begin{equation} \label{Eqn:Nx}
	N_\mathcal{G}(x)-\beta_n\subseteq S.
	\end{equation}
	Also, since $|N_\mathcal{G}(x)\cap\beta_n|=k$, then $1\leq |N_{\mathcal{G}-S}(x)\cap \beta_n|\leq k$. We may thus assume that, without loss of generality, $x_k^n\in V(\mathcal{G}-S)$, and hence $x_k^n$ is in the same component of $\mathcal{G}-S$ as $x$, say $C_1$.
	
	We let $S_1$ be the set of all the neighbours of $x$ that do not contain the entry $z_n$, that is, $S_1=N_G(x)-\beta_n$. Therefore,
	\begin{equation} \label{Eqn:S1}
	|S_1|=|N_\mathcal{G}(x)|-|N_\mathcal{G}(x)\cap \beta_n|=k(n-k)-k=k(n-k-1).
	\end{equation}
	We remark that $S_1=\left\{x_i^j : i\in\{1,\ldots,k\} {\textrm{ and }} j\in\{k+1,\ldots,n-1\}\right\}$, and that by (\ref{Eqn:Nx}), we have $S_1\subseteq S$.
	From Remark \ref{Rem-NborEveryComp}, it follows that all the vertices in $S_1$ have at least one neighbour in each component of $\mathcal{G}-S$. In particular, consider the vertex $x_k^{k+1}\in S_1 \subseteq S$ and take one of its neighbours $w$ in $\mathcal{G}-S$ which is not in $C_1$. The vertex $w$ contains the entry $z_{k+1}$, for otherwise it would be in $S_1$. Also, $w$ contains an entry $z_h$ for $h\in\{k+2,\ldots,n-1\}$, for otherwise, if $w=x_i^{k+1}$ for $i\in\{1,\ldots,k-1\}$ then $w\in S_1$, and if $w=x_{i,k}^{k+1,n}$ for $i\in\{1,\ldots,k-1\}$, then $w$ is adjacent to $x_k^n$ in $C_1$, a contradiction. Without loss of generality, we can thus assume that $w=x_{k-1,k}^{k+1,k+2}$ and that $w$ is in the component $C_2$ of $\mathcal{G}-S$.
	
	We now consider the neighbours of $w$ which are not in $S_1$. These are given by
	\begin{align*}
	N_{\mathcal{G}-S_1}(w)&=\left\{x_{k-1,k}^{k+1,j} : j\in\{k+3,\ldots,n\}\right\} \cup \left\{x_{k-1,k}^{k+2,j} : j\in\{k+3,\ldots,n\}\right\} \\
	& \cup \left\{x_{i,k-1,k}^{k+1,k+2,j} : i\in\{1,\ldots,k-2\} {\rm ~and~ } j\in\{k-1,k,k+3,\ldots,n\}\right\}.
	\end{align*}
	
	We partition this set of $k(n-k)-4$ vertices into four sets, as follows:
	
	$A_1=\Big\{v\in N_{\mathcal{G}-S_1}(w):H(v,x_k^n)=1\Big\}=\left\{x_{k-1,k}^{k+1,n},x_{k-1,k}^{k+2,n}\right\}$, implying that $|A_1|=2$.
	
	\begin{align*}
	A_2 &= \Big\{v\in N_{\mathcal{G}-S_1}(w):H(v,x_k^n)=2\Big\}\setminus\left\{x_{k-1,k}^{k+1,j}:j\in\{k+3,\ldots,n-1\}\right\} \\
	&= \left\{x_{k-1,k}^{k+2,j}:j\in\{k+3,\ldots,n-1\}\right\}\cup\left\{x_{i,k}^{k+1,k+2}:i\in\{1,\ldots,k-2\}\right\} \\
	& ~~~~\cup \left\{x_{i,k-1,k}^{k+1,k+2,n}:i\in\{1,\ldots,k-2\}\right\},
	\end{align*}
	implying that  $|A_2|=n+k-7$.
	
	\begin{align*}
	A_3 &= \Big\{v\in N_{\mathcal{G}-S_1}(w):H(v,x_k^n)=3\Big\} \\
	&= \left\{x_{i,k-1,k}^{k+1,k+2,j}:i\in\{1,\ldots,k-2\} {\rm ~and~ } j\in\{k,k+3,\ldots,n-1\}\right\},
	\end{align*}
	implying that $|A_3|=(k-2)(n-k-2)$.
	
	$A_4=\left\{x_{k-1,k}^{k+1,j}:j\in\{k+3,\ldots,n-1\}\right\}$, implying that $|A_4|=n-k-3$.

	We construct internally disjoint paths from $w$ to $x_k^n$ or from $w$ to $x$, depending on which one of the sets defined above contains the different neighbours of $w$.
	
	\begin{enumerate}[(I)]
		\item   For the neighbours of $w$ which are in $A_1$, the paths are as follows:
		$$w\sim x_{k-1,k}^{k+1,n} \sim x_k^n ~~{\rm and }~~ w\sim x_{k-1,k}^{k+2,n} \sim x_k^n.$$
		\item   For the neighbours of $w$ which are in $A_2$, the paths are as follows:
		\begin{enumerate}[(i)]
			\item for $j\in\{k+3,\ldots,n-1\}$,
			$$w\sim x_{k-1,k}^{k+2,j} \sim x_{k-1,k}^{n,j} \sim x_k^n;$$
			\item for $i\in\{1,\ldots,k-2\}$,
			$$w \sim x_{i,k}^{k+1,k+2} \sim x_{i,k}^{k+1,n} \sim x_k^n;$$
			\item for $i\in\{1,\ldots,k-2\}$,
			$$w \sim x_{i,k-1,k}^{k+1,k+2,n} \sim x_{i,k}^{k+2,n} \sim x_k^n.$$
		\end{enumerate}
		\item   For the neighbours of $w$ which are in $A_3$, then for $i\in\{1,\ldots,k-2\}$ and $j\in\{k,k+3,\ldots,n-1\}$, the paths are given by
		$$w\sim x_{i,k-1,k}^{k+1,k+2,j} \sim x_{i,k-1,k}^{k+1,n,j} \sim x_{i,k}^{n,j} \sim x_k^n.$$

		\item   Finally, we consider one of the vertices in $A_4$, say $x_{k-1,k}^{k+1,k+3}$, and construct a path between $x$ and $w$ as follows
		$$w\sim x_{k-1,k}^{k+1,k+3} \sim x_{2,k-1}^{k+1,k+3} \sim x_{2,k-1}^{k+3,n} \sim x_{k-1}^n \sim x.$$
		
	\end{enumerate}
	It is easy to check that the vertices utilised in constructing all the above paths are not in $S_1$ and are all distinct. Thus, the number of vertex disjoint paths constructed from the vertices $x$ and $x_k^n$ in $C_1$ to the vertex $w$ in $C_2$ is given by
	$$|A_1|+|A_2|+|A_3|+1=2+(n+k-7)+(k-2)(n-k-2)+1=(k-1)(n-k).$$
	
	Hence, $S$ contains an additional $(k-1)(n-k)$ vertices and from (\ref{Eqn:S1}), we get that $|S|\geq (n-k)(2k-1)-k$, as required.
\end{proof}

We remark that the assumptions made in the following lemma will seem artificial for the time being, however their relevance will become clear when Lemma \ref{Lem:S-S'} is used in the proof of Theorem \ref{Theo:main}.

\begin{lemma} \label{Lem:S-S'}
	Let $S$ be a minimum super vertex-cut of $\mathcal{G}=J(n,k)$, where $ k\geq 3 $ and $ n \geq k+3 $. Let $ C_1 $ and $ C_2 $ be two components of $ \mathcal{G}-S $ such that $ C_1' = C_1 - \beta_r $ and $ C_2' = C_2- \beta_r$, for some $r\in [n]$. Assume that the vertex $u$ is in $C_1'$ and that the vertex $v$ is in $C_2'$ such that $H(u,v)=2$, and assume that $u$ is adjacent to $\bar{u}$ in $C_1'$ and that $v$ is adjacent to $\bar{v}$ is $C_2'$. Then there are at least $(2k-1)$ internally disjoint paths between $ C_1' $ and $ C_2' $ such that all the internal vertices on these paths contain the entry $z_r$.
\end{lemma}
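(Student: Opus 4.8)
The plan is to reduce the statement to a Menger-type connectivity argument carried out inside the subgraph induced by $\beta_r$. First I would fix $r=n$ without loss of generality and recall that $\mathcal{G}[\beta_n]$, the subgraph induced by the vertices containing the entry $z_n$, is isomorphic to $J(n-1,k-1)$: writing such a vertex as $\{z_n\}\cup T$ with $T\in\binom{[n-1]}{k-1}$, two of them are adjacent exactly when their $T$--parts meet in $k-2$ entries. Since none of $u,\bar u,v,\bar v$ contains $z_n$, a path of the kind we want is precisely a path whose two endpoints lie in $\{u,\bar u\}$ and $\{v,\bar v\}$ and whose internal vertices all lie in $\beta_n$. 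Accordingly I would set $X=\big(N_{\mathcal{G}}(u)\cup N_{\mathcal{G}}(\bar u)\big)\cap\beta_n$ and $Y=\big(N_{\mathcal{G}}(v)\cup N_{\mathcal{G}}(\bar v)\big)\cap\beta_n$, the admissible first and last internal vertices of such paths; note that a neighbour of $u$ lying in $\beta_n$ is exactly a vertex $u_i^n$ with $i\in\{1,\ldots,k\}$.

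The first key step is the count $|X|=|Y|=2k-1$. Each of $u$ and $\bar u$ has exactly $k$ neighbours in $\beta_n$, one for each entry that may be replaced by $z_n$. A common neighbour of $u$ and $\bar u$ in $\beta_n$ has the form $T\cup\{z_n\}$ with $T$ a $(k-1)$--subset of both $u$ and $\bar u$; since $|u\cap\bar u|=k-1$ this forces $T=u\cap\bar u$, so there is a unique common neighbour. Hence $|X|=k+k-1=2k-1$, and symmetrically $|Y|=2k-1$. This is exactly where the hypotheses $u\sim\bar u$ in $C_1'$ and $v\sim\bar v$ in $C_2'$ enter, since a single vertex contributes only $k<2k-1$ candidate endpoints.

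The second step is a Menger argument inside $\mathcal{G}[\beta_n]\cong J(n-1,k-1)$, whose connectivity equals its degree $(k-1)(n-k)$ \cite{Daven1999}, and $(k-1)(n-k)\geq 3(k-1)\geq 2k-1$ because $n\geq k+3$ and $k\geq 3$. I would adjoin two terminals $s,t$, joining $s$ to every vertex of $X$ and $t$ to every vertex of $Y$, and bound a minimum $s$--$t$ separator $Z\subseteq\beta_n$ from below. If $Z\supseteq X$ or $Z\supseteq Y$ then $|Z|\geq 2k-1$; otherwise there are $x\in X\setminus Z$ and $y\in Y\setminus Z$ lying in different components of $\mathcal{G}[\beta_n]-Z$, so $Z$ is a vertex--cut of $J(n-1,k-1)$ and $|Z|\geq (k-1)(n-k)\geq 2k-1$. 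By Menger's theorem there are at least $2k-1$ internally disjoint $s$--$t$ paths; removing $s$ and $t$ and reattaching each path to whichever of $u,\bar u$ (respectively $v,\bar v$) is adjacent to its endpoint in $X$ (respectively $Y$) produces $2k-1$ internally disjoint paths between $C_1'$ and $C_2'$ whose internal vertices all lie in $\beta_n$, as required; shared endpoints among $u,\bar u,v,\bar v$ are harmless since only internal disjointness is claimed.

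The step I expect to be the main obstacle is the bookkeeping of the second paragraph: one must check not merely that $|X|\geq 2k-1$ but that the neighbourhoods of $u$ and $\bar u$ in $\beta_n$ overlap in exactly one vertex, as an overlap of two or more would drop $|X|$ below $2k-1$ and collapse the separator bound. Once the connectivity input is in place, the Menger translation is routine. It is worth remarking that the argument appeals to $\partial(u,v)=2$ only through the configuration in which the lemma will be invoked in Theorem \ref{Theo:main}; the existence of the paths itself rests on the high connectivity of $\beta_n$ rather than on the precise Hamming distance between $u$ and $v$.
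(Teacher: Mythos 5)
Your proof is correct, but it follows a genuinely different route from the paper's. The paper proves Lemma \ref{Lem:S-S'} by explicit construction: it uses $\partial(u,v)=2$ to fix a common neighbour $x=z_1z_2\ldots z_k$, normalises $u=x_1^{k+1}$ and $v=x_2^{k+2}$, and then exhibits the $2k-1$ paths one by one (the paths $P_j$, $T_1,\ldots,T_{26}$ and the $Q_i$), split into four cases according to which of the entries $z_3,\ldots,z_k$ are missing from $\bar u$ and $\bar v$, with internal disjointness verified by hand through bookkeeping of ``missing pairs of entries''. You instead reduce the lemma to Menger's theorem inside $\mathcal{G}[\beta_n]\cong J(n-1,k-1)$, and the two ingredients you need both check out: each of $u,\bar u$ has exactly $k$ neighbours in $\beta_n$, and, being adjacent, they have exactly one common neighbour there, namely $(u\cap\bar u)\cup\{z_n\}$, so $|X|=|Y|=2k-1$; and any $s$--$t$ separator either contains all of $X$ or all of $Y$ (hence has size at least $2k-1$) or else disconnects $\mathcal{G}[\beta_n]$, which is a connected non-complete Johnson graph (here $2\leq k-1\leq n-4$) whose connectivity equals its degree $(k-1)(n-k)\geq 3(k-1)\geq 2k-1$ --- a fact the paper already cites (\cite{Daven1999}, also \cite{Watkins1970, Brouwer2009vertex}), so no circularity is introduced. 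Your argument is much shorter, immune to the slips that a large case analysis risks, and in fact proves a stronger statement: you never use $\partial(u,v)=2$, nor that $C_1$ and $C_2$ are distinct components of $\mathcal{G}-S$; all you need is $u\sim\bar u$, $v\sim\bar v$ with the four vertices outside $\beta_r$, and this is precisely where the hypothesis of an adjacent partner enters (a single vertex would give only $k<2k-1$ admissible entry points into $\beta_r$). What the paper's construction buys in exchange is self-containedness --- it does not invoke the connectivity theorem for Johnson graphs inside this lemma --- and explicit paths of bounded length; neither is needed for the application in Theorem \ref{Theo:main}, where the lemma serves only to force $|S\cap\beta_r|\geq 2k-1$ via the fact that every constructed path must pass through $S$.
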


\begin{proof}
	Since by our assumption $H(u,v)=2$, we consider a common neighbour of $u$ and $v$, which for ease of notation, we denote by $x=z_1z_2\ldots z_k$. This implies that $r\in [n]\setminus [k]$, for otherwise $u$ and $v$ are adjacent. Without loss of generality, let $r=n$. Thus, $u=x_{h_1}^{\ell_1}$ where $h_1\in [k]$ and $\ell_1\in [n-1]\setminus [k]$ and $v=x_{h_2}^{\ell_2}$ where $h_2\in [k]$ and $\ell_2\in [n-1]\setminus [k]$, such that $h_1\neq h_2$ and $\ell_1\neq \ell_2$. For simplicity, in the following  work, whenever we require to remove an entry of $x$ or add an entry which is not in $x$, we shall choose the smallest entry available. Thus, we let $u=x_1^{k+1}$ and $v=x_2^{k+2}$. Consequently, when the entry $z_{k+3}$ is added at some stage in sequel, it is being implicitly assumed that this was chosen without loss of generality to represent $z_{\zeta}$ where $\zeta\in [n-1]\setminus [k+2]$.
	
	For $j\in [k]\setminus\{1,2\}$, consider the vertices
	\begin{align*}
	x_{1,j}^{k+1,n}, x_{2,j}^{k+1,n}, \textrm{~and~} x_{2,j}^{k+2,n}.
	\end{align*}

	For $k\geq 5$, let the paths $P_j$, where $j\in \{5,\ldots,k\}$, be given by
	\begin{align} \label{EqnStar}
	P_j:= u \sim x_{1,j}^{k+1,n} \sim x_{2,j}^{k+1,n} \sim x_{2,j}^{k+2,n} \sim v.
	\end{align}
	
	We remark that for $k\in \{3,4\}$, no paths are constructed thus far. Also, in the cases discussed below, we shall refer again to (\ref{EqnStar}) to construct the path $P_3$ by taking $j=3$ when $k=3$, and the paths $P_3$ and $P_4$ by taking $j\in\{3,4\}$ when $k\geq 4$. For ease of writing, it will thus be assumed that when we say that we put $j=3$ and $j=4$ in (\ref{EqnStar}) to construct paths $P_3$ and $P_4$, these paths are actually constructed for each of the cases $k=3$ and $k\geq 4$ by substituting the corresponding values of $j$ in (\ref{EqnStar}) according to the explanation given above.
	
	Furthermore, define the paths $T_1$ and $T_2$ as follows:
	\begin{align*}
	T_1 &:= u \sim x_1^n \sim x_{1,2}^{k+2,n} \sim v \\
	T_2 &:= u \sim x_{1,2}^{k+1,n} \sim x_2^n \sim v.
	\end{align*}
	
	Then the paths $P_j$ for $ j \in \{5,\dots,k\} $  (if any), together with the two paths $T_1$ and $T_2$ are internally disjoint paths. We note that the pairs of entries $\{z_1,z_2\}$, $\{z_{k+1},z_{k+2}\}$, and $\{z_2,z_{k+2}\}$ do not feature in any of the internal vertices of the paths constructed thus far. In the sequel, we refer to these pairs of entries as ``the missing pairs of entries''. Also, all the internal vertices of these paths contain the pair of entries $\{z_3,z_4\}$. These two observations make it easier to verify that the internal vertices that are used to construct any further paths are different from those already availed of.

	In the rest of the proof, we will show that we can construct  the remaining required number of internally disjoint paths from $\{u,\bar{u}\}$ to $\{v,\bar{v}\}$ such that all the internal vertices of these paths contain the entry $z_n$.  Since exactly one entry of $u$ is not in $\bar{u}$ and similarly one entry of $v$ is not in $\bar{v}$, and noting that there are no edges from the vertices of $C_1$ to the vertices of $C_2$, then the following properties hold:
	\begin{enumerate}[(i)]
		\item   $1\leq H(\bar{u},x) \leq 2$ and $1\leq H(\bar{v},x) \leq 2$; and
		\item   $2\leq H(\bar{u},\bar{v})\leq 4$.
	\end{enumerate}
	The above properties underlie the motivation behind the four separate cases that follow, in the sense that each depends on the entries of $x$ that are present in the vertices $\bar{u}$ and $\bar{v}$. We thus focus on the entries of $x$ that are in both $u$ and $v$, namely $\{z_3,\ldots,z_k\}$, and differentiate between the cases when these entries are or are not in $\bar{u}$ and $\bar{v}$.
	
	\noindent\emph{Case I.} The entries $\{z_3,z_4,\ldots,z_k\}$ are in both $\bar{u}$ and $\bar{v}$.
	
	In this case, the vertex $\bar{u}$ is given by $x_{1,2}^{\beta,k+3}$ where $\beta\in\{2,k+1\}$, and the vertex $\bar{v}$ is given by $x_{1,2}^{\alpha,k+4}$ where $\alpha\in\{1,k+2\}$. We remark that, as explained above, the entries $z_{k+3}$ and $z_{k+4}$ were chosen without loss of generality to simplify the work done.
	
	For $i\in\{3,\ldots,k\}$, let the paths $Q_i$ be given by
	$$Q_i:= \bar{u} \sim x_{1,2,i}^{\beta,k+3,n} \sim x_{1,2,i}^{\alpha,k+3,n} \sim x_{1,2,i}^{\alpha,k+4,n} \sim \bar{v}$$
	and the path $T_3$ be
	$$T_3:= \bar{u} \sim x_{1,2}^{k+3,n} \sim x_{1,2}^{k+4,n} \sim \bar{v}.$$
	
	Finally, we put $j=3$ and $j=4$ in (\ref{EqnStar}) to obtain the internally disjoint paths $P_3$ and $P_4$, which are also internally disjoint from the paths $P_j$ for $j\in\{5,\ldots,k\}$ (if any).

	We note that the paths $Q_i$ and $T_3$ are $k-1$ internally disjoint paths, and since all their internal vertices contain either the entry $z_{k+3}$ or $z_{k+4}$, they are also internally disjoint from the paths $T_1$, $T_2$ and $P_j$ for $j\in\{3,\ldots,k\}$. We have thus constructed a total of $2k-1$ internally disjoint paths from the vertices in $C_1$ to the vertices in $C_2$, as required.

	\noindent\emph{Case II.} The entries $\{z_3,\ldots,z_k\}$ are in $\bar{v}$ but not in $\bar{u}$ (or vice-versa).
	
	Since $H(\bar{u},u)=1$, then at most one of $\{z_3,\ldots,z_k\}$ is not in $\bar{u}$, say $z_3$. Thus, the vertex $\bar{u}$ is given by $x_{1,3}^{k+1,\beta}$ where $\beta\in\{1,k+2,k+3\}$, and the vertex $\bar{v}$ is given by $x_{1,2}^{\gamma,\alpha}$ where $\gamma\in\{1,k+2\}$ and $\alpha\in\{k+3,k+4\}$.
	
	For $i\in\{4,\ldots,k\}$, let the paths $Q_i$ (if any) be given by
	$$Q_i:= \bar{u} \sim x_{1,3,i}^{k+1,\beta,n} \sim x_{1,3,i}^{k+1,\alpha,n} \sim x_{1,2,3,i}^{k+1,\alpha,\gamma,n} \sim x_{1,2,i}^{\alpha,\gamma,n} \sim \bar{v}.$$
	We remark that if $\beta=k+3$ and $\alpha=k+3$, then $x_{1,3,i}^{k+1,\beta,n}=x_{1,3,i}^{k+1,\alpha,n}$, in which case the operation of going from $x_{1,3,i}^{k+1,\beta,n}$ to $x_{1,3,i}^{k+1,\alpha,n}$ is suppressed since the two vertices are the same. This procedure is also implicitly assumed and used in similar instances in the cases that follow.
	
	Also, let the paths $T_4$ and $T_5$ be
	\begin{align*}
	T_4 &:= \bar{u} \sim x_{1,3}^{\beta,n} \sim x_{1,3}^{\alpha,n} \sim x_{1,2,3}^{\alpha,\gamma,n} \sim \bar{v} \\
	T_5 &:= \bar{u} \sim x_{1,2,3}^{k+1,\beta,n} \sim x_{1,2,3}^{k+1,\alpha,n} \sim x_{1,2}^{\alpha,n} \sim \bar{v}.
	\end{align*}
	
	We note that the paths $Q_i$, $T_4$ and $T_5$ are $k-1$ internally disjoint paths, and all their internal vertices either contain the entries $z_{k+3}$ or $z_{k+4}$ or one of the pairs from the missing pairs of indices, or else they are lacking the pair of indices $\{z_3, z_4\}$. Thus, they are also internally disjoint from the paths $T_1$, $T_2$ and $P_j$ for $j\in\{5,\ldots,k\}$ (if any).
	
	Finally, to construct the last required paths which are internally disjoint from all the others obtained so far,
	\begin{enumerate}[$\circ$]
		\item   if $\beta \in \{k+2,k+3\}$, we put $j=3$ and $j=4$ in (\ref{EqnStar}) above to get the two paths $P_3$ and $P_4$;
		\item   if $\beta=1$, we put $j=4$ in (\ref{EqnStar}) to get $P_4$ and consider the path $T_6$ given by
		$$T_6 := u \sim x_{1,3}^{k+1,n} \sim x_{1,3}^{k+2,n} \sim x_{2,3}^{k+2,n} \sim v.$$
	\end{enumerate}
	
	Thus, the required $2k-1$ internally disjoint paths from the vertices in $C_1$ to the vertices in $C_2$ have been obtained.
	
	\noindent\emph{Case III.} The same entry from $\{z_3,\ldots,z_k\}$ is missing in both $\bar{u}$ and $\bar{v}$.
	
	Let the entry $z_3$ be missing from both vertices $\bar{u}$ and $\bar{v}$. Two different subcases can arise.
	\begin{enumerate}[(A)]
		\item   $\bar{u}$ is given by $x_{1,3}^{k+1,\beta}$ where $\beta\in\{1,k+2\}$, and $\bar{v}$ is given by $x_{2,3}^{k+2,k+3}$;
		\item   $\bar{u}$ is given by $x_{1,3}^{k+1,k+3}$, and $\bar{v}$ is given by $x_{2,3}^{k+2,\alpha}$ where $\alpha\in\{2,k+1,k+3,k+4\}$.
	\end{enumerate}
	
	\noindent\emph{Subcase III(A).}\\
	For $i\in\{4,\ldots,k\}$, let the paths $Q_i$ (if any) be given by
	$$Q_i:= \bar{u} \sim x_{1,3,i}^{k+1,\beta,n} \sim x_{1,2,3,i}^{k+1,k+3,\beta,n} \sim x_{2,3,i}^{k+2,k+3,n} \sim \bar{v}$$
	and the paths $T_7$ and $T_8$ be defined as
	\begin{align*}
	T_7 &:= \bar{u} \sim x_{1,3}^{\beta,n} \sim x_{1,3}^{k+3,n} \sim x_{1,2,3}^{k+2,k+3,n} \sim \bar{v}\\
	T_8 &:= \bar{u} \sim x_{1,2,3}^{k+1,\beta,n} \sim x_{1,2,3}^{k+1,k+3,n} \sim x_{2,3}^{k+3,n} \sim \bar{v}.
	\end{align*}
	As before, these $k-1$ new paths are mutually internally disjoint and are also internally disjoint from the paths $T_1$, $T_2$ and $P_j$ for $j\in\{5,\ldots,k\}$  (if any).
	
	To conclude this subcase, we construct the last required paths which are internally disjoint from all the others obtained so far,
	\begin{enumerate}[$\circ$]
		\item   if $\beta=k+2$, we put $j=3$ and $j=4$ in (\ref{EqnStar}) above to get the two paths $P_3$ and $P_4$;
		\item   if $\beta=1$, we put $j=4$ in (\ref{EqnStar}) to get $P_4$ and take the path $T_6$ as in Case II above.
	\end{enumerate}
	
	\noindent\emph{Subcase III(B).}\\
	For $i\in\{4,\ldots,k\}$, let the paths $Q_i$ (if any) be given by
	$$Q_i := \bar{u} \sim x_{1,3,i}^{k+1,k+3,n} \sim x_{3,i}^{k+3,n} \sim x_{2,3,i}^{k+2,k+3,n} \sim x_{2,3,i}^{k+2,\alpha,n} \sim \bar{v},$$
	which are mutually internally disjoint and are also internally disjoint from the paths $T_1$, $T_2$ and $P_j$ for $j\in\{5,\ldots,k\}$ (if any).
	
	Finally, the last required paths are constructed as follows.
	\begin{enumerate}[$\circ$]
		\item   If $\alpha\in\{k+3,k+4\}$, put $j=3$ and $ j=4 $ in (\ref{EqnStar}) above and consider the paths $T_9$ and $T_{10}$ defined by:
		\begin{align*}
		T_9 &:= \bar{u} \sim x_{1,3}^{k+3,n} \sim x_{1,3}^{\alpha,n} \sim x_{1,2,3}^{k+2,\alpha,n} \sim \bar{v}\\
		T_{10} &:= \bar{u} \sim x_{1,2,3}^{k+1,k+3,n} \sim x_{1,2,3}^{k+1,\alpha,n} \sim x_{2,3}^{\alpha,n} \sim \bar{v}
		\end{align*}
		\item   If $\alpha\in\{2,k+1\}$, put $j=4$ in (\ref{EqnStar}) to get $ P_4 $, and consider the paths $T_{11}$ $T_{12}$ and $ T_{13} $ defined by:
		\begin{align*}
		T_{11} &:= \bar{u} \sim x_{1,2,3}^{k+3,\alpha,n} \sim x_{1,2,3}^{k+2,\alpha,n} \sim \bar{v}\\
		T_{12} &:= \bar{u} \sim x_{1,2,3}^{k+3,\bar{\alpha},n} \sim x_{2,3}^{k+3,n} \sim x_{2,3}^{\alpha,n} \sim \bar{v}\\
		T_{13} &:= u \sim x_{1,3}^{k+1,n} \sim x_{2,3}^{\bar{\alpha},n} \sim x_{2,3}^{k+2,n} \sim v
		\end{align*}
		where $\bar{\alpha}=\{2,k+1\}\setminus\{\alpha\}$.
	\end{enumerate}
	
	\noindent\emph{Case IV.} A different entry from $\{z_3,\ldots,z_k\}$ is missing from $\bar{u}$ and $\bar{v}$.
	
	Let the entry $z_3$ be missing from the vertex $\bar{u}$ and the entry $z_4$ be missing from the vertex $\bar{v}$. This gives rise to three different subcases.
	\begin{enumerate}[(A)]
		\item   $\bar{u}$ is given by $x_{1,3}^{k+1,\beta}$ where $\beta\in\{1,k+2\}$, and $\bar{v}$ is given by $x_{2,4}^{k+2,k+3}$;
		\item   $\bar{u}$ is given by $x_{1,3}^{k+1,\beta}$ where $\beta\in\{1,k+2\}$, and $\bar{v}$ is given by $x_{2,4}^{k+2,\alpha}$ where $\alpha\in\{2,k+1\}$;
		\item   $\bar{u}$ is given by $x_{1,3}^{k+1,k+3}$, and $\bar{v}$ is given by $x_{2,4}^{k+2,\alpha}$ where $\alpha\in\{2,k+1,k+3,k+4\}$;
	\end{enumerate}
	
	In each of the following subcases, it can be checked that the new paths constructed are mutually internally disjoint and are also internally disjoint from the paths $T_1$, $T_2$ and $P_j$ for $j\in\{5,\ldots,k\}$ (if any).
	
	\noindent\emph{Subcase IV(A).}\\
	For $i\in\{5,\ldots,k\}$, let the paths $Q_i$ (if any) be given by
	$$Q_i:= \bar{u} \sim x_{1,3,i}^{k+1,\beta,n} \sim x_{1,2,3,i}^{k+1,k+3,\beta,n} \sim x_{1,2,i}^{k+3,\beta,n} \sim x_{2,4,i}^{k+2,k+3,n} \sim \bar{v}$$
	and the paths $T_{14}$, $T_{15}$ and $T_{16}$ be defined as
	\begin{align*}
	T_{14} &:= \bar{u} \sim x_{1,3,4}^{k+1,\beta,n} \sim x_{1,3,4}^{k+3,\beta,n} \sim x_{2,3,4}^{k+2,k+3,n} \sim \bar{v}\\
	T_{15} &:= \bar{u} \sim x_{1,3}^{\beta,n} \sim x_{3}^{n} \sim x_{4}^{n}\sim x_{2,4}^{k+3,n} \sim \bar{v}\\
	T_{16} &:= \bar{u} \sim x_{1,2,3}^{k+1,\beta,n} \sim x_{1,2,3}^{k+3,\beta,n} \sim x_{1,2,3}^{k+2,k+3,n} \sim x_{1,2,4}^{k+2,k+3,n} \sim \bar{v}
	\end{align*}
	
	To conclude this subcase,
	\begin{enumerate}[$\circ$]
		\item if $\beta=k+2$, put $j=3$ and $j=4$ in (\ref{EqnStar}) to get the paths $ P_3 $ and $ P_4 $;
		\item if $\beta=1$,  put $j=4$ in (\ref{EqnStar}) to get the path $ P_4 $, and take the path $T_6$ as in Case II above.
	\end{enumerate}

	\noindent\emph{Subcase IV(B).}\\
	For $i\in\{5,\ldots,k\}$, let the paths $Q_i$ (if any) be given by
	$$Q_i:= \bar{u} \sim x_{1,3,i}^{k+1,\beta,n} \sim x_{1,3,i}^{k+1,k+2,n} \sim x_{1,4,i}^{k+1,k+2,n} \sim x_{2,4,i}^{k+2,\alpha,n} \sim \bar{v}$$
	and the paths $T_{17}$, $T_{18}$, $T_{19}$ and $ T_{20} $ be defined as
	\begin{align*}
	T_{17} &:= \bar{u} \sim x_{1,3,4}^{k+1,\beta,n} \sim x_{1,3,4}^{k+1,k+2,n} \sim x_{2,3,4}^{k+2,\alpha,n} \sim \bar{v}\\
	T_{18} &:= \bar{u} \sim x_{1,2,3}^{k+1,\beta,n} \sim x_{1,2,3}^{k+1,k+2,n} \sim x_{1,2,4}^{k+1,k+2,n} \sim x_{1,2,4}^{k+2,\alpha,n} \sim \bar{v}\\
	T_{19} &:= \bar{u} \sim x_{1,3}^{\beta,n} \sim x_{3}^{n} \sim x_{4}^{n} \sim x_{2,4}^{\alpha,n} \sim \bar{v}
	\end{align*}
	
	To conclude this subcase,
	\begin{enumerate}[$\circ$]
		\item if $\alpha=2$ and $ \beta=1 $, put $j=4$  in (\ref{EqnStar}) to get $ P_4 $ and consider the path $ T_{20} $ defined by:
		$$ T_{20} := u \sim x_{1,3}^{k+1,n} \sim \omega \sim x_{2,3}^{k+2,n} \sim v $$
		where $ \omega = x_{1,3}^{k+2,n} $ when $ \beta=1 $ and $ \omega = x_{2,3}^{k+1,n} $ when $ \beta=k+2 $.
		
		\item if $\alpha=k+1$ and if $ \beta=k+2 $,  put $j=3$ in (\ref{EqnStar}) to get $ P_3 $ and consider the path $T_{21}$ defined by:
		$$T_{21} := u \sim x_{1,4}^{k+1,n} \sim x_{1,4}^{k+2,n} \sim x_{2,4}^{k+2,n} \sim v.$$
		\item if $\alpha=k+1$ and if $ \beta=1 $,  consider the paths $ T_6 $ and $T_{21}$.
	\end{enumerate}

	\noindent\emph{Subcase IV(C).}\\
	For $i\in\{5,\ldots,k\}$, let the paths $Q_i$ (if any) be given by
	$$Q_i:= \bar{u} \sim x_{1,3,i}^{k+1,k+3,n} \sim x_{1,3,i}^{k+2,k+3,n} \sim x_{1,4,i}^{k+2,k+3,n} \sim x_{2,4,i}^{k+2,k+3,n} \sim x_{2,4,i}^{k+2,\alpha,n} \sim \bar{v}$$
	and the path $T_{22}$ be defined as
	\begin{align*}
	T_{22} &:= \bar{u} \sim x_{1,3}^{k+3,n} \sim x_{2,3}^{k+3,n} \sim x_{2,3}^{\alpha,n} \sim x_{2,4}^{\alpha,n} \sim \bar{v}\\
	\end{align*}
	
	To conclude this last subcase,
	\begin{enumerate}[$\circ$]
		\item if $\alpha\in\{k+3,k+4\}$, put $j=3$ and $j=4$ in (\ref{EqnStar}) to get the paths $ P_3 $ and $ P_4$, and consider the paths $T_{23}$ and $T_{24}$ defined below:
		\begin{align*}
		T_{23} &:= \bar{u} \sim x_{2,3,4}^{k+1,k+3,n} \sim x_{2,3,4}^{k+1,\alpha,n} \sim x_{2,3,4}^{k+2,\alpha,n} \sim \bar{v}\\
		T_{24} &:= \bar{u} \sim x_{1,2,3}^{k+1,k+3,n} \sim x_{1,2,3}^{k+1,\alpha,n} \sim x_{1,2,4}^{k+1,\alpha,n} \sim x_{1,2,4}^{k+2,\alpha,n} \sim \bar{v}
		\end{align*}
		
		\item if $\alpha=2$,  put $j=3$ and $j=4$ in (\ref{EqnStar}) to get the paths $ P_3 $ and $ P_4$, and consider the paths $T_{25}$ and $T_{26}$ defined by:
		\begin{align*}
		T_{25} &:= \bar{u} \sim x_{2,3,4}^{k+1,k+3,n} \sim x_{2,3,4}^{k+3,\alpha,n} \sim x_{2,3,4}^{k+2,\alpha,n} \sim \bar{v}\\
		T_{26} &:= \bar{u} \sim x_{1,2,3}^{k+1,k+3,n} \sim x_{1,2}^{k+3,n} \sim x_{1,2,4}^{k+3,\alpha,n} \sim x_{1,2,4}^{k+2,\alpha,n} \sim \bar{v}.
		\end{align*}
		
		\item if $\alpha=k+1$, consider the paths $T_6$, $T_{21}$, $T_{25}$ and $T_{26}$.
	\end{enumerate}

	
\end{proof}

In Theorem \ref{Theo:k=3} we use induction on $n$ to show that the super--connectivity of $J(n,3)$ is equal to $5n-18$ for $n\geq 6$. This will serve in establishing the base case for the inductive argument used in proving our main result given in Theorem \ref{Theo:main}.

\begin{theorem} \label{Theo:k=3}
	$ \kappa'(J(n,3)) = 5n-18$, where $ n \geq 6 $.
\end{theorem}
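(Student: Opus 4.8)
The plan is to combine the upper bound of Lemma~\ref{Lem:UB-J(n,k)}, which already gives $\kappa'(J(n,3))\leq 5n-18$ for $n\geq 6$, with a matching lower bound that I would prove by induction on $n$, taking $n=6$ as the base case. Throughout I write $\mathcal{G}=J(n,3)$, fix a minimum super vertex--cut $S$, and exploit the isomorphism $\mathcal{G}-\beta_r\cong J(n-1,3)$ together with the decomposition $|S|=|S_r|+|S\cap\beta_r|$, where $S_r:=S\setminus\beta_r$. The argument then splits according to Lemma~\ref{Lem:AllNborsContainR}: if there is an entry $z_r$ and a vertex $x\in V(\mathcal{G}-S)-\beta_r$ with $N_{\mathcal{G}-S}(x)-\beta_r=\emptyset$, that lemma immediately yields $|S|\geq 5n-18$ and we are finished. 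So I would assume the contrary, namely that for every $r\in[n]$ the graph $(\mathcal{G}-\beta_r)-S$ has no isolated vertices, and in this remaining case locate an entry $z_r$ for which $S_r$ is a super vertex--cut of $J(n-1,3)$.

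To produce such an $r$, I would first invoke Remark~\ref{Rem-NborEveryComp} to obtain a vertex $w\in S$ with a neighbour in every component of $\mathcal{G}-S$. Choosing two distinct components $C_1,C_2$ and neighbours $u\in C_1$, $v\in C_2$ of $w$ forces $\partial(u,v)=2$: the triangle inequality through $w$ gives $\partial(u,v)\leq 2$, while $u$ and $v$ lie in different components and so are non--adjacent. Hence $|u\cap v|=1$ and $u\cup v$ has exactly five entries, so for $n\geq 6$ there is an entry $z_r\notin u\cup v$. For this $r$ we get $u\in C_1':=C_1-\beta_r$ and $v\in C_2':=C_2-\beta_r$; since $(\mathcal{G}-\beta_r)-S$ has no isolated vertices, $u$ has a neighbour $\bar{u}$ in $C_1'$ and $v$ has a neighbour $\bar{v}$ in $C_2'$. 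Because $C_1'$ and $C_2'$ lie in different components of $\mathcal{G}-S$, no path of $(\mathcal{G}-\beta_r)-S$ joins them, so $S_r$ disconnects $J(n-1,3)$ without isolating a vertex; that is, $S_r$ is a super vertex--cut of $J(n-1,3)$.

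For the base case $n=6$, the subgraph $\mathcal{G}-\beta_r\cong J(5,3)\cong J(5,2)$ has no super vertex--cut at all, as recorded at the start of Section~\ref{Sect-mainResults}; hence this case cannot arise, which forces the first alternative above and gives $|S|\geq 12=5\cdot 6-18$. For $n\geq 7$, the inductive hypothesis gives $|S_r|\geq\kappa'(J(n-1,3))=5(n-1)-18=5n-23$. To bound $|S\cap\beta_r|$ I would apply Lemma~\ref{Lem:S-S'} to $C_1,C_2,u,v,\bar{u},\bar{v}$ and the entry $z_r$ (all of whose hypotheses are exactly the configuration just assembled): it furnishes $2k-1=5$ internally disjoint paths between $C_1'$ and $C_2'$ whose internal vertices all lie in $\beta_r$. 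Each such path joins two distinct components of $\mathcal{G}-S$, so it contains a vertex of $S$, which is necessarily internal and therefore in $S\cap\beta_r$; internal disjointness makes these five vertices distinct, so $|S\cap\beta_r|\geq 5$. Thus $|S|=|S_r|+|S\cap\beta_r|\geq(5n-23)+5=5n-18$, matching the upper bound and completing the induction.

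The main obstacle is not any single estimate but verifying that the chosen entry $z_r$ simultaneously meets every requirement downstream: that $u,v,\bar{u},\bar{v}$ all survive in $\mathcal{G}-\beta_r$, that $C_1'$ and $C_2'$ are genuinely separated there, and that the resulting configuration is precisely the one Lemma~\ref{Lem:S-S'} was engineered to handle. The distance--$2$ pair supplied by Remark~\ref{Rem-NborEveryComp}, together with the observation that $u\cup v$ occupies only five of the $n\geq 6$ entries, is what makes all of these align. A secondary subtlety is ensuring the induction bottoms out cleanly: rather than a separate computation at $n=6$, the non--existence of a super vertex--cut in $J(5,3)$ rules out the reduction case outright, leaving only the branch governed by Lemma~\ref{Lem:AllNborsContainR}.
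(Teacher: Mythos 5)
Your proposal is correct, and it keeps the paper's overall skeleton---induction on $n$, the upper bound from Lemma~\ref{Lem:UB-J(n,k)}, the dichotomy via Lemma~\ref{Lem:AllNborsContainR}, the reduction to $J(n-1,3)$ through $|S|=|S_r|+|S\cap\beta_r|$, and Lemma~\ref{Lem:S-S'} supplying the increment of $5$---but it reaches the key configuration by a genuinely different route. The paper (whose proof of Theorem~\ref{Theo:k=3} is just the inductive step of Theorem~\ref{Theo:main} with $k=3$) locates the entry $z_r$ via Lemma~\ref{lemma5}, supported by Lemmas~\ref{lemma3} and~\ref{lemma4}, and then picks vertices $x\in C$, $y\in C_*$ avoiding $z_r$; you bypass Lemmas~\ref{lemma3}--\ref{lemma5} entirely, instead using Remark~\ref{Rem-NborEveryComp} to obtain $w\in S$ with neighbours $u,v$ in two distinct components, so that $\partial(u,v)=2$ and $|u\cup v|=5<n$ automatically produce a suitable $z_r$. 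This buys you two things the paper's write-up leaves implicit: the distance-$2$ hypothesis of Lemma~\ref{Lem:S-S'} is verified explicitly (the paper's $x$ and $y$ need not be at distance $2$), and your blanket assumption that $(\mathcal{G}-\beta_r)-S$ has no isolated vertices for \emph{every} $r$ is exactly what is needed to conclude that $S_r$ is a super vertex--cut of $J(n-1,3)$, whereas the paper only rules out isolated vertices at $x$ and $y$ themselves. Your base case also differs: instead of the paper's direct verification that $\kappa'(J(6,3))=12$, you observe that the reduction branch would yield a super vertex--cut of $J(5,3)\cong J(5,2)$, which does not exist by the remark opening Section~\ref{Sect-mainResults}, so the Lemma~\ref{Lem:AllNborsContainR} branch must apply. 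Both routes are sound; yours trades the three omitted lemmas for a sharper use of Remark~\ref{Rem-NborEveryComp} and is, if anything, tighter at the two points where the paper's argument is terse.
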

\begin{proof}
	The upper bound follows from Lemma \ref{Lem:UB-J(n,k)}. For the lower bound, we use induction on $ n $, where $ n\geq 6 $. To establish the base case, it can be readily checked that $ \kappa'(J(6,3)) = 12 $. We assume that $ \kappa'(J(t,3)) \geq 5t-18 $ for $ 7\leq t \leq n-1 $.
	
	The proof of the inductive step is the same as that in Theorem \ref{Theo:main} by taking $k=3$, and hence we omit it here.	Letting $ S $ be a minimum super vertex of $ J(n,3) $, we have $ \kappa'(J(n,3))\geq |S|\geq 5n-18 $, as required.
	
	%
	%

\end{proof}

We conclude this section by proving our main result.

\begin{theorem} \label{Theo:main}
	$\kappa'(J(n,k))= (2k-1)(n-k)-k$ for $k\geq 3$ and $n\geq k+3$. \end{theorem}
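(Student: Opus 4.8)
The plan is to prove the lower bound $\kappa'(J(n,k))\geq (2k-1)(n-k)-k$ by induction on $n$, with the base case $n=k+3$ (and the companion base provided by Theorem \ref{Theo:k=3} when $k=3$), since the matching upper bound is already supplied by Lemma \ref{Lem:UB-J(n,k)}. Let $S$ be a minimum super vertex--cut of $\mathcal{G}=J(n,k)$ and suppose for contradiction that $|S|<(2k-1)(n-k)-k$. The first move is to exploit the structural reduction on which \cite{Daven1999} is built: deleting the set $\beta_r$ of all vertices containing a fixed entry $z_r$ yields a copy of $J(n-1,k)$. By Lemma \ref{lemma3}, every entry $z_r$ appears in at least one vertex of $S$, and by Lemma \ref{lemma4} no single entry appears in all of $S$; Lemma \ref{lemma5} then furnishes an entry $z_r$ that is not common to the smallest component $C$ and its complement $C_*$. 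The idea is to pick $r$ so that $S_r:=S\cap\beta_r$ is small and then analyse the quotient graph $\mathcal{G}-\beta_r\cong J(n-1,k)$.

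The heart of the argument is a dichotomy. Either the set $S':=S\setminus\beta_r$, viewed inside $J(n-1,k)$, still separates two components there, in which case the inductive hypothesis $\kappa'(J(n-1,k))\geq (2k-1)(n-1-k)-k$ applies and, after accounting for the vertices of $S$ that lie in $\beta_r$ (of which there must be at least $2k-1$ by a counting argument), we recover $|S|\geq (2k-1)(n-k)-k$; or else removing $\beta_r$ merges the components, meaning the separation in $\mathcal{G}$ was ``held together'' by vertices containing $z_r$. The second horn is exactly where Lemmas \ref{Lem:AllNborsContainR} and \ref{Lem:S-S'} are deployed. If some component vertex $x\notin\beta_r$ has all its surviving neighbours inside $\beta_r$, Lemma \ref{Lem:AllNborsContainR} already forces $|S|\geq(2k-1)(n-k)-k$ directly. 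Otherwise each component survives into $J(n-1,k)$ as nonempty $C_1'=C_1-\beta_r$ and $C_2'=C_2-\beta_r$, and one locates vertices $u\in C_1'$, $v\in C_2'$ with $\partial(u,v)=2$ together with their neighbours $\bar u,\bar v$; Lemma \ref{Lem:S-S'} then produces at least $2k-1$ internally disjoint paths between $C_1'$ and $C_2'$ whose internal vertices all contain $z_r$. These paths must be blocked by $S$, so $S\cap\beta_r$ contributes at least $2k-1$ vertices on top of the $S'$ bound from induction, again yielding the required inequality.

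I expect the main obstacle to be the bookkeeping in the inductive step: one must choose the entry $r$ correctly so that \emph{both} the reduced cut $S'$ genuinely disconnects $J(n-1,k)$ without isolating a vertex (so that the inductive hypothesis is legitimately applicable) \emph{and} the $2k-1$ paths supplied by Lemma \ref{Lem:S-S'} are internally disjoint from the part of $S$ already counted by induction. In particular, one has to verify that the vertices of $S\cap\beta_r$ charged against the path-blocking argument are not double-counted against the copies of the cut living in $J(n-1,k)$; this is precisely why Lemma \ref{Lem:S-S'} is phrased so that all internal path vertices contain $z_r$, cleanly separating the two contributions. Reconciling the edge cases---when a component becomes a single vertex after deleting $\beta_r$, or when $\partial(u,v)\neq 2$ forces a different choice of intermediate vertex---is the delicate part, and is the reason the result is split off from the $k=2$ case where a direct counting argument sufficed.
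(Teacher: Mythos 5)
Your inductive step is essentially the paper's own argument: choose $z_r$ via Lemma~\ref{lemma5}, dispose of the case where some vertex outside $\beta_r$ retains no neighbour outside $\beta_r$ via Lemma~\ref{Lem:AllNborsContainR}, and otherwise observe that $S'=S-\beta_r$ is a nonempty, proper (Lemmas~\ref{lemma3} and~\ref{lemma4}) super vertex--cut of $\mathcal{G}-\beta_r\cong J(n-1,k)$, so the inductive hypothesis gives $|S'|\geq(2k-1)(n-1-k)-k$ while Lemma~\ref{Lem:S-S'} gives $|S\cap\beta_r|\geq 2k-1$; the two contributions are disjoint by definition, and you correctly identify that this is the whole point of requiring the internal path vertices to contain $z_r$, so your worry about double-counting resolves itself. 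One framing slip: deleting $\beta_r$ can never ``merge'' components, so that horn of your opening dichotomy is vacuous. The genuine failure modes are an empty image $C-\beta_r$ (excluded by the choice of $z_r$ from Lemma~\ref{lemma5}) or an isolated vertex in $(\mathcal{G}-\beta_r)-S'$, i.e.\ $S'$ failing to be a \emph{super} vertex--cut, which is exactly what Lemma~\ref{Lem:AllNborsContainR} exists to handle; your second paragraph gets this right, so the slip is cosmetic.

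The genuine gap is the base case. Your induction is on $n$ for each fixed $k$, so you need a starting point at $n=k+3$ for \emph{every} $k\geq 3$, yet you anchor it only ``by Theorem~\ref{Theo:k=3} when $k=3$''; for $k\geq 4$ your induction never starts, and proving $\kappa'(J(k+3,k))=5k-3$ from scratch is not a triviality one can leave implicit. The missing idea is the complementation isomorphism $J(n,k)\cong J(n,n-k)$, which gives $J(k+3,k)\cong J(k+3,3)$ and hence, by Theorem~\ref{Theo:k=3},
\[
\kappa'(J(k+3,k))=\kappa'(J(k+3,3))=5(k+3)-18=5k-3=(2k-1)\bigl((k+3)-k\bigr)-k,
\]
which is precisely how the paper establishes the base. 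A further caveat: you flag, but do not resolve, the hypothesis of Lemma~\ref{Lem:S-S'} that there exist $u\in C_1'$, $v\in C_2'$ with $\partial(u,v)=2$ together with neighbours $\bar u,\bar v$ inside their components. To be fair, the paper's proof of Theorem~\ref{Theo:main} also invokes the lemma without discharging this hypothesis, so you are no worse off than the paper here, but it is a hypothesis, not an ``edge case,'' and a complete write-up would have to address it.
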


\begin{proof}
	The upper bound follows from Lemma \ref{Lem:UB-J(n,k)}. For the lower bound, we use induction on $n$, where $n\geq k+3$ and $k\geq 3$. To establish the base case, we note that $J(n,k)$ is isomorphic to $J(n,n-k)$, and thus $\kappa'(J(k+3,k))=\kappa'(J(k+3,3))=5k-3=(2k-1)(n-k)-k$ for $n=k+3$ by Theorem \ref{Theo:k=3}. We assume that $\kappa'(J(t,k))\geq (2k-1)(t-k)-k$ for $k+3\leq t\leq n-1$.
	
	Let $ \mathcal{G}= J(n,k) $. Let $ S $ be a minimum super vertex--cut of $ \mathcal{G} $. Let $ C $ be a smallest component of $ \mathcal{G}-S $ and let $ C_*=(\mathcal{G}-S) -C$. By Lemma \ref{lemma5}, there is an entry $z_r$ for $r\in [n]$ that is not contained in every vertex of both $C$ and $C_*$. Let $x\in V(C)$ and let $y\in V(C_*)$ such that the vertices $x$ and $y$ do not contain the entry $z_r$.
	
	Suppose that all the neighbours of $x$ in $\mathcal{G}-S$ contain the  entry $z_r$. Then by Lemma \ref{Lem:AllNborsContainR}, $|S|\geq (2k-1)(n-k)-k$,  and we are done. Similarly, there is nothing more to prove if all the neighbours of $y$ in $\mathcal{G}-S$ contain the entry $z_r$.
	
	Thus, we can assume that at least one of the neighbours of $x$ in $C$ and at least one of the neighbours of $y$ in $C_*$ do not contain the  entry $z_r$. We consider $\mathcal{G}'=\mathcal{G}-\beta_r$, which is isomorphic to $J(n-1,k)$. If we let $C'=C-\beta_r$ and $C'_{*}=C_{*}-\beta_r$, then $|C'|\geq 2$ and $|C'_*|\geq 2$. Also, by Lemma \ref{lemma4}, the entry $z_r$ cannot be contained in every vertex of $S$. Thus, $S'=S-\beta_r$ is not empty and we have $S'\subset S$ by Lemma \ref{lemma3}. Thus, $S'$ is a minimum super vertex-cut of $\mathcal{G}$ since it is non-empty and there are no isolated vertices in $\mathcal{G} - S'$. Hence, by the inductive hypothesis, $|S'|\geq (2k-1)(n-1-k)-k$. By Lemma \ref{Lem:S-S'} we have that $ |S \cap \beta_r| \geq 2k-1 $, implying that $|S-S'|\geq 2k-1$. This completes our proof.
	
	Hence, $\kappa'(J(n,k))\geq |S|\geq (2k-1)(n-k)-k$, as required.
	
\end{proof}

\section{Conclusion} \label{Sect-Conclusion}

As already discussed, the Johnson graph $J(n,k)$ is isomorphic to $J(n,n-k)$, and thus $J(k+1,k)\cong J(k+1,1)$ and $J(k+2,k)\cong J(k+2,2)$. Hence, the super--connectivity of the family of Johnson graphs is given by the proposition below.

\begin{proposition}\label{FinalProp}
	The super--connectivity of the Johnson graph $J(n,k)$ for $n\geq k\geq 1$ is given by
	$$\kappa'(J(n,k))=\left\{
	\begin{array}{cl}
	3(n-3) & $if $ k=2 $ and $ n\geq 6,\\
	3(k-1) & $if $ k\geq 4 $ and $ n=k+2,\\
	(2k-1)(n-k)-k & $if $ k\geq 3 $ and $ n\geq k+3\\
	+\infty & $otherwise.$
	\end{array}\right.$$
\end{proposition}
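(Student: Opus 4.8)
The plan is to read Proposition~\ref{FinalProp} as an assembly of results already in hand, glued together by the complementation isomorphism $J(n,k)\cong J(n,n-k)$ and completed by a direct treatment of the degenerate cases in which no super vertex--cut exists. The two generic formulae need no fresh argument. The value $3(n-3)$ for $k=2$ and $n\geq 6$ is exactly the theorem of Section~\ref{Sect-mainResults}, while the value $(2k-1)(n-k)-k$ for $k\geq 3$ and $n\geq k+3$ is precisely Theorem~\ref{Theo:main}. I would therefore open by citing these two statements and observing that they account, respectively, for the first and third branches of the displayed formula.

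I would then dispatch the branch $n=k+2$. Using $J(k+2,k)\cong J(k+2,2)$ and applying the theorem of Section~\ref{Sect-mainResults} to $J(k+2,2)$ yields $\kappa'(J(k+2,k))=3((k+2)-3)=3(k-1)$. The one subtle point is the range of validity: the theorem of Section~\ref{Sect-mainResults} requires the ambient size of the reduced graph to satisfy $k+2\geq 6$, so the reduction is clean for $k\geq 4$ and the value $k=3$ has to be inspected separately.

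Finally I would settle the ``otherwise'' branch, where the assertion is $\kappa'=+\infty$, i.e.\ that no super vertex--cut exists. The relevant families are $n=k$ (where $J(n,k)$ is a single vertex), $k=1$ (where $J(n,1)\cong K_n$), and $n=k+1$ (where $J(k+1,k)\cong J(k+1,1)\cong K_{k+1}$). For any complete graph $K_m$ the argument is uniform: removing at most $m-2$ vertices leaves a complete graph on at least two vertices, which is connected, while removing $m-1$ vertices isolates a single vertex, so $K_m$ can never be broken into two parts each of order at least two; thus $\kappa'(K_m)=+\infty$. The residual small graphs are the triangular graphs $J(3,2)$, $J(4,2)$ and $J(5,2)$, whose lack of a super vertex--cut was already recorded at the start of Section~\ref{Sect-mainResults}.

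The main obstacle, as I see it, is not computation but the boundary between the finite and the infinite regimes. The reduction for $n=k+2$ is legitimate only once the target $J(k+2,2)$ is big enough to carry a super vertex--cut, and the instance $k=3$ (that is, $J(5,3)\cong J(5,2)$) lies exactly on this threshold: $J(5,2)$ is the triangular graph on $10$ vertices that is strongly regular with parameters $(10,6,3,4)$ and whose only minimum vertex--cuts are vertex neighbourhoods, so it admits no super vertex--cut and must be argued by hand rather than read off the $3(k-1)$ formula. Carefully matching each small pair $(n,k)$ to its correct branch, and verifying that the degenerate graphs genuinely cannot be split into two nontrivial components, is the portion I would write out in detail; everything else reduces to quoting the preceding theorems and the isomorphism.
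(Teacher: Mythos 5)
Your assembly is exactly the paper's own proof: the conclusion section obtains the proposition by citing the $k=2$ theorem, Theorem~\ref{Theo:main}, and the isomorphism $J(n,k)\cong J(n,n-k)$ (giving $J(k+1,k)\cong K_{k+1}$ and $J(k+2,k)\cong J(k+2,2)$), with the degenerate cases assigned $+\infty$. The value you add lies precisely in the boundary case you flag, and you should press it to its conclusion rather than hedge. You are right that the reduction to $J(k+2,2)$ legitimately invokes the $3(n-3)$ theorem only when $k+2\geq 6$, i.e.\ $k\geq 4$, and you are right that $J(5,3)\cong J(5,2)$ has no super vertex--cut (two adjacent vertices of $J(5,2)$ cover three elements of $[5]$, so exactly one vertex is disjoint from both, and a second component containing an edge cannot exist; the paper itself records this at the start of Section~\ref{Sect-mainResults}). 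But then the case $(n,k)=(5,3)$ cannot be ``argued by hand'' so as to fit the formula: the proposition as printed is simply false there, since its second branch gives $\kappa'(J(5,3))=3(3-1)=6$ while the true value is $+\infty$. This is an internal inconsistency that the paper's one-sentence proof overlooks. The corrected statement should restrict the $3(k-1)$ branch to $k\geq 4$ (equivalently $n=k+2\geq 6$) and place $(5,3)$ in the $+\infty$ branch; with that amendment, your outline --- the two theorems, the complementation reduction for $k\geq 4$, the complete-graph argument for $k=1$, $n=k$ and $n=k+1$, and the recorded small cases $J(3,2)$, $J(4,2)$, $J(5,2)$ --- constitutes a complete and correct proof, and is in fact more careful than the paper's.
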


In future work, it would be interesting to investigate the super--connectivity of the uniform subset graphs $G(n,k,t)$ for other values of the parameter $t$. It is worth noting that if $t=1$, then $G(n,k,1)$ is the Kneser graph $KG(n,k)$. \cite{GauciBoruzanli} proved that the super--connectivity of $KG(n,2)$ for $n\geq 5$ is equal to ${n \choose 2}-6$, and a related conjecture for the remaining cases was formulated.

\acknowledgements
\label{sec:ack}
The authors would like to thank the referees for their comments and suggestions which helped in improving this paper.

\nocite{*}
\bibliographystyle{abbrvnat}
\bibliography{dmtcs}

\begin{thebibliography}{35}
\providecommand{\natexlab}[1]{#1}
\providecommand{\url}[1]{\texttt{#1}}
\expandafter\ifx\csname urlstyle\endcsname\relax
  \providecommand{\doi}[1]{doi: #1}\else
  \providecommand{\doi}{doi: \begingroup \urlstyle{rm}\Url}\fi

\bibitem[Agong et~al.(2018)Agong, Amarra, Caughman, Herman, and
  Terada]{Agong2018}
L.~Agong, C.~Amarra, J.~Caughman, A.~Herman, and T.~Terada.
\newblock {On the girth and diameter of generalized Johnson graphs}.
\newblock \emph{Discrete Mathematics}, 341\penalty0 (1):\penalty0 138--142,
  2018.

\bibitem[Babai(2016)]{Babai2016}
L.~Babai.
\newblock {Graph Isomorphism in Quasipolynomial Time}.
\newblock \emph{arXiv:1512.03547}, 2016.

\bibitem[Balbuena and Marcote(2019)]{Balbuena2019}
C.~Balbuena and X.~Marcote.
\newblock The $ p $-restricted edge-connectivity of {K}neser graphs.
\newblock \emph{Applied Mathematics and Computation}, 343:\penalty0 258--267,
  2019.

\bibitem[Baranyai(1975)]{Baranyai}
Z.~Baranyai.
\newblock On the factorization of the complete uniform hypergraph.
\newblock \emph{Colloq. Math. Soc. J\'{a}nos Bolyai}, 10:\penalty0 91--108,
  1975.

\bibitem[Bautista-Santiago et~al.(2013)Bautista-Santiago, Cano, Fabila-Monroy,
  Flores-Pe\~{n}aloza, Gonz\'{a}lez-Aguilar, Lara, Sarmiento, and
  Urrutia]{Bautista2013}
C.~Bautista-Santiago, J.~Cano, R.~Fabila-Monroy, D.~Flores-Pe\~{n}aloza,
  H.~Gonz\'{a}lez-Aguilar, D.~Lara, E.~Sarmiento, and J.~Urrutia.
\newblock {On the connectedness and diameter of a geometric Johnson graph}.
\newblock \emph{{Discrete Mathematics \& Theoretical Computer Science}},
  15:\penalty0 21--30, 2013.

\bibitem[Boesch and Tindell(1984)]{BoeschTindell1984}
F.~Boesch and R.~Tindell.
\newblock Circulants and their connectivities.
\newblock \emph{Journal of Graph Theory}, 8\penalty0 (4):\penalty0 487--499,
  1984.
\newblock ISSN 03649024.

\bibitem[Boesch(1986)]{Boesch1986}
F.~T. Boesch.
\newblock Synthesis of reliable networks - a survey.
\newblock \emph{IEEE Transactions on Reliability}, 35\penalty0 (3):\penalty0
  240--–246, 1986.

\bibitem[Boruzanl{\i}~Ekinci and Gauci(2019{\natexlab{a}})]{BoruzanliGauci-GP}
G.~Boruzanl{\i}~Ekinci and J.~Gauci.
\newblock On the reliability of generalized petersen graphs.
\newblock \emph{Discrete Applied Mathematics}, 252:\penalty0 2--9,
  2019{\natexlab{a}}.

\bibitem[Boruzanl{\i}~Ekinci and Gauci(2019{\natexlab{b}})]{GauciBoruzanli}
G.~Boruzanl{\i}~Ekinci and J.~Gauci.
\newblock {The super--connectivity of Kneser graphs}.
\newblock \emph{Discussiones Mathematicae Graph Theory}, 39:\penalty0 5--11,
  2019{\natexlab{b}}.

\bibitem[Boruzanl{\i}~Ekinci and K{\i}rlang{\i}\c{c}(2016)]{Boruzanli2016}
G.~Boruzanl{\i}~Ekinci and A.~K{\i}rlang{\i}\c{c}.
\newblock Super connectivity of {K}ronecker product of complete bipartite
  graphs and complete graphs.
\newblock \emph{Discrete Mathematics}, 339\penalty0 (7):\penalty0 1950--195,
  2016.

\bibitem[Brouwer and Koolen(2009)]{Brouwer2009vertex}
A.~E. Brouwer and J.~H. Koolen.
\newblock The vertex-connectivity of a distance-regular graph.
\newblock \emph{European Journal of Combinatorics}, 30\penalty0 (3):\penalty0
  668--673, 2009.

\bibitem[Cao and Vumar(2014)]{Cao2014}
X.~Cao and E.~Vumar.
\newblock {Super edge connectivity of Kronecker products of graphs}.
\newblock \emph{International Journal of Foundations of Computer Science},
  25\penalty0 (01):\penalty0 59--65, 2014.

\bibitem[Chartrand et~al.(2010)Chartrand, Lesniak, and Zhang]{Chartrand2010}
G.~Chartrand, L.~Lesniak, and P.~Zhang.
\newblock \emph{Graphs \& digraphs}.
\newblock Chapman and Hall/CRC, 2010.

\bibitem[Chen and Lih(1987)]{ChenLih1987}
B.~L. Chen and K.-W. Lih.
\newblock {Hamiltonian Uniform Subset Graphs}.
\newblock \emph{Journal of Combinatorial Theory, Series B}, 42:\penalty0
  257--263, 1987.

\bibitem[Chen and Wang(2008)]{ChenWeng2008}
Y.~Chen and W.~Wang.
\newblock Diameters of uniform subset graphs.
\newblock \emph{{Discrete Mathematics}}, 308:\penalty0 6645--6649, 2008.

\bibitem[Cioab{\u{a}} et~al.(2012)Cioab{\u{a}}, Kim, and
  Koolen]{Cioabua2012conj}
S.~M. Cioab{\u{a}}, K.~Kim, and J.~H. Koolen.
\newblock On a conjecture of {B}rouwer involving the connectivity of strongly
  regular graphs.
\newblock \emph{Journal of Combinatorial Theory, Series A}, 119\penalty0
  (4):\penalty0 904--922, 2012.

\bibitem[Daven and Rodger(1999)]{Daven1999}
M.~Daven and C.~Rodger.
\newblock {The Johnson graph $J(v, k)$ has connectivity $\delta$}.
\newblock \emph{Congressus Numerantium}, pages 123--128, 1999.

\bibitem[Diego et~al.(2018)Diego, Serra, and Vena]{Diego2018problem}
V.~Diego, O.~Serra, and L.~Vena.
\newblock On a problem by {S}hapozenko on {J}ohnson graphs.
\newblock \emph{Graphs and Combinatorics}, 34\penalty0 (5):\penalty0 947--964,
  2018.

\bibitem[Guo and Guo(2015)]{Guo2015}
L.~Guo and X.~Guo.
\newblock Super connectivity of {K}ronecker products of some graphs.
\newblock \emph{{Ars Combinatoria}}, 123:\penalty0 65--73, 2015.

\bibitem[Guo et~al.(2018)Guo, Su, Lin, and Chen]{Guo2018}
L.~Guo, G.~Su, W.~Lin, and J.~Chen.
\newblock Fault tolerance of locally twisted cubes.
\newblock \emph{Applied Mathematics and Computation}, 334:\penalty0 401--406,
  2018.

\bibitem[Hamidoune et~al.(1999)Hamidoune, Llad{\'o}, and
  Serra]{Hamidoune1999iso}
Y.~O. Hamidoune, A.~S. Llad{\'o}, and O.~Serra.
\newblock An isoperimetric problem in {C}ayley graphs.
\newblock \emph{Theory of Computing Systems}, 32\penalty0 (5):\penalty0
  507--516, 1999.

\bibitem[Harary(1983)]{Harary1983}
F.~Harary.
\newblock {Conditional connectivity}.
\newblock \emph{Networks}, 13:\penalty0 347--357, 1983.

\bibitem[Heydemann(1997)]{Heydemann1997}
M.~Heydemann.
\newblock Cayley graphs and interconnection networks.
\newblock In G.~Hahn and G.~Sabidussi, editors, \emph{Graph Symmetry: Algebraic
  Methods and Applications}, pages 167--224. Springer Netherlands, Dordrecht,
  1997.
\newblock ISBN 978-94-015-8937-6.

\bibitem[Hsu and Lin(2008)]{HsuLin2008}
L.~Hsu and C.~Lin.
\newblock \emph{Graph Theory and Interconnection Networks}.
\newblock CRC Press, 2008.
\newblock ISBN 9781420044829.

\bibitem[Kneser(1955)]{Kneser1955}
M.~Kneser.
\newblock Aufgabe 360.
\newblock \emph{Jahresbericht der DMV}, 58:\penalty0 27, 1955.

\bibitem[L{\"u} et~al.(2008)L{\"u}, Wu, Chen, and Lv]{LuEtAl2008}
M.~L{\"u}, C.~Wu, G.-L. Chen, and C.~Lv.
\newblock On super connectivity of {C}artesian product graphs.
\newblock \emph{Networks}, 52\penalty0 (2):\penalty0 78--87, 2008.

\bibitem[Meng(2003)]{Meng2003}
J.~Meng.
\newblock Connectivity of vertex and edge transitive graphs.
\newblock \emph{Discrete Applied Mathematics}, 127\penalty0 (3):\penalty0
  601--613, 2003.

\bibitem[Moon(1984)]{Moon1984}
A.~Moon.
\newblock {The graphs $G(n,k)$ of the Johnson schemes are unique for $n\geq
  20$}.
\newblock \emph{{Journal of Combinatorial Theory, Series B}}, 37:\penalty0
  173--188, 1984.

\bibitem[M{\"u}tze and Su(2017)]{Mutze2017}
T.~M{\"u}tze and P.~Su.
\newblock {Bipartite Kneser graphs are Hamiltonian}.
\newblock \emph{Combinatorica}, 37\penalty0 (6):\penalty0 1207--1219, 2017.

\bibitem[Simpson(1994)]{Simpson1994}
J.~Simpson.
\newblock On uniform subset graphs.
\newblock \emph{{Ars Combinatoria}}, 37:\penalty0 309--318, 1994.

\bibitem[van Dam et~al.(2014)van Dam, Koolen, and Tanaka]{Van2014Dist}
E.~R. van Dam, J.~H. Koolen, and H.~Tanaka.
\newblock Distance-regular graphs.
\newblock \emph{arXiv:1410.6294}, 2014.

\bibitem[Van~Lint and Wilson(2001)]{VanLintWilson}
J.~Van~Lint and R.~Wilson.
\newblock \emph{A course in combinatorics}.
\newblock Cambridge University Press, Cambridge, 2 edition, 2001.

\bibitem[Watkins(1970)]{Watkins1970}
M.~Watkins.
\newblock Connectivity of transitive graphs.
\newblock \emph{Journal of Combinatorial Theory}, 8\penalty0 (1):\penalty0 23
  -- 29, 1970.
\newblock ISSN 0021-9800.

\bibitem[Yang and Meng(2009)]{YangMeng2009}
W.~Yang and J.~Meng.
\newblock Extraconnectivity of hypercubes.
\newblock \emph{Applied Mathematics Letters}, 22:\penalty0 887--891, 2009.

\bibitem[Yang and Meng(2010)]{YangMeng2010}
W.~Yang and J.~Meng.
\newblock {Extraconnectivity of hypercubes (II)}.
\newblock \emph{Australasian Journal of Combinatorics}, 47:\penalty0 189--195,
  2010.

\end{thebibliography}
\label{sec:biblio}

\end{document}